\newcommand\Ccancel[2][black]{
    \let\OldcancelColor\CancelColor
    \renewcommand\CancelColor{\color{#1}}
    \cancel{#2}
    \renewcommand\CancelColor{\OldcancelColor}
}
\newcommand{\CC}{\mathbf{C}}
\newcommand{\RR}{\mathbf{R}}
\newcommand{\mfa}{\mathfrak{a}}
\newcommand{\p}[2][]{\partial{#1} / \partial{#2}}
\newcommand{\pp}[2][]{\frac{\partial{#1}}{\partial{#2}}}
\providecommand{\pf}{\mathrm{pf}}
\theoremstyle{plain}
\newtheorem{theorem}{Theorem}[section]
\newtheorem{lemma}[theorem]{Lemma}
\newtheorem{corollary}[theorem]{Corollary}
\newtheorem{proposition}[theorem]{Proposition}
\theoremstyle{definition}
\newtheorem{definition}[theorem]{Definition}
\newtheorem{example}[theorem]{\textnormal{\textbf{Example}}}
\newtheorem{Standard Process}[theorem]{Procedures}
\theoremstyle{remark}
\newtheorem{remark}[theorem]{Remark}
\begin{document}

\title{On contact Hamiltonian functions of singular contact structures}

\keywords{Singular contact structure, Contact Hamiltonian function, Infinitesimal contact transformation, Martinet hyperusrface}

\subjclass[2020]{Primary 53D10; Secondary 58A17}

\author{Hoseob Seo}

\begin{abstract}
For contact manifolds, it is well-known that the map which assigns to an infinitesimal contact transformation its contact Hamiltonian function is a linear isomorphism, and an explicit local formula for its inverse can be given. In contrast, infinitesimal contact transformations on a singular contact structure have not been well-studied yet. In this article, we show that this map is injective and present an explicit local formula for its inverse when the contact structure has singularities of the first type with structurally smooth Martinet hypersurface.
\end{abstract}

\maketitle

\section{Introduction}

Let $M$ be a complex manifold of complex dimension $2k+1$ ($k \ge 1$). A (holomorphic) distribution $\mathcal{D}$ of dimension $2k$ on $M$ is called a contact structure if it satisfies the contact condition. In this case, one can find a family of local contact $1$-forms, $\omega = \{\omega_i\}$ whose kernel is $\mathcal{D}$. Moreover, such a contact form defines the globally defined $1$-form $\widetilde{\omega}$ with values in the quotient bundle $L:=T^{1,0}M/\mathcal{D}$ on $M$. Denote by $\mfa(M,\widetilde{\omega})$ the Lie algebra of infinitesimal contact transformations of $M$, that is, the set of holomorphic vector fields $X$ on $M$ such that $\mathcal{L}_X \widetilde{\omega}$ locally equals to $h\widetilde{\omega}$ for some holomorphic function $h$. The contact $L$-valued $1$-form $\widetilde{\omega}$ defines the map $\theta: \mfa(M,\widetilde{\omega}) \longrightarrow H^0(M,L)$ by $\theta(X) = \widetilde{\omega}(X)$, where $\theta (X)$ is called the contact Hamiltonian section of $L$ associated to $X$. This map is indeed a linear isomorphism. Additionally, for a given section $s \in H^0(M, L)$, a local formula for the vector field $X$ whose image under $\theta$ is $s$ is also known (cf. \cite{Ko}, \cite{Ly75}).

In this article, we mainly study singular contact structures of the first type and their infinitesimal contact transformations. By a singular contact structure of the first type, we mean a family of locally defined nowhere vanishing $1$-forms $\omega = \{ \omega_i \}$ such that $\omega \wedge (d\omega)^k$ vanishes along a subvariety of codimension $1$ in $M$. In this setting, one can also consider the map 
$$
\theta : \mfa(M,\widetilde{\omega}) \longrightarrow H^0(M, L)
$$
defined as before. In contrast to nonsingular contact structures, this map may not be an isomorphism. However, we can say that this map is injective for a certain class of singular contact forms.
\begin{theorem}[=Theorem~\ref{THM:EvaluatingIsInjective}]
     Let $\omega = \{\omega_i\}$ be a singular contact structure on $M$ of the first type with structurally smooth Martinet hypersurface. Then the the sequence of sheaves
     $$
     0 \longrightarrow \mathfrak{a}(M,\widetilde \omega) \stackrel{\theta}{\longrightarrow} \mathcal{O}(L)
     $$
     is exact.
\end{theorem}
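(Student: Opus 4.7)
Injectivity of the sheaf morphism $\theta\colon \mfa(M,\widetilde\omega)\to \OO(L)$ is equivalent to the following pointwise statement: for every open $U\subseteq M$, any $X\in \mfa(U,\widetilde\omega)$ with $\widetilde\omega(X)=0$ must vanish identically on $U$. The strategy is to combine the classical smooth-contact result off the Martinet hypersurface with the identity principle to extend the vanishing across it.

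Let $\Sigma\subset M$ denote the Martinet hypersurface, i.e.\ the codimension-one subvariety on which $\omega\wedge (d\omega)^k$ vanishes. On $M\setminus \Sigma$, the family $\{\omega_i\}$ restricts to a genuine (nonsingular) holomorphic contact structure, so by the classical result recalled in the introduction (cf.\ \cite{Ko}, \cite{Ly75}), $\theta$ is a linear isomorphism on $M\setminus\Sigma$ and is in particular injective there. Hence any $X\in\mfa(U,\widetilde\omega)$ with $\widetilde\omega(X)=0$ already satisfies $X\equiv 0$ on $U\setminus\Sigma$.

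To propagate the vanishing across $\Sigma$, I would use analytic continuation. Since $\Sigma$ is a complex hypersurface in $M$, the complement $U\setminus\Sigma$ is open and dense in $U$ and meets every connected component of $U$ nontrivially. Expressing $X$ in any local holomorphic chart, each of its coefficient functions is holomorphic on $U$ and vanishes on the dense open subset $U\setminus\Sigma$, and therefore vanishes on all of $U$ by the identity principle. This gives $X\equiv 0$ on $U$, as required.

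The only nontrivial input is the first step, in which one invokes the known local formula for $\theta^{-1}$ in the nonsingular contact setting to deduce $X|_{U\setminus\Sigma}=0$ directly from $\widetilde\omega(X)=0$; once that is granted, the extension across $\Sigma$ is a formal matter of analytic continuation. Note that for injectivity alone the assumption that the Martinet hypersurface be structurally smooth is not actually used, it suffices that $\Sigma$ be a proper codimension-one subvariety; that hypothesis will instead play its essential role in the explicit local inverse formula for $\theta$ developed next.
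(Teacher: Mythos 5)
Your argument is correct, but it proceeds quite differently from the paper. The paper does not deduce injectivity from the nonsingular case at all: it reduces to the local normal form $\omega = dz + \alpha + z\beta$ of Janeczko--Zhitomirskii, decomposes $X = fZ + \sum_m s_m X_m$ along $\langle Z\rangle \oplus \ker(\omega)$, and shows (Lemma~\ref{LEM:InfAutoCondition} and Theorem~\ref{THM:InfAutoFromFtn}) that the coefficients $s_m$ are determined from $f=\omega(X)$ by a Cramer's-rule/Pfaffian formula whose denominator $\det W = c^2z^2 + \mathrm{o}(z^2)$ is not identically zero; uniqueness of the $s_m$ as meromorphic functions then gives injectivity as a byproduct of the explicit inverse formula. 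Your route --- invoke injectivity of $\theta$ for the genuine contact structure on $M\setminus\Sigma$ (where it follows from nondegeneracy of $d\omega$ on $\ker\omega$), then extend $X\equiv 0$ across the nowhere dense analytic set $\Sigma$ by continuity of the holomorphic coefficients --- is shorter, more elementary, and, as you correctly note, needs neither the first-type hypothesis nor structural smoothness of the Martinet hypersurface, only that the degeneracy locus is nowhere dense. What it does not give you is the explicit local inverse and the characterization of the image of $\theta$, which is the real content of Theorem~\ref{THM:InfAutoFromFtn} and is what drives the applications (Corollary~\ref{CORO:VanishingOrder1ImpliesVanishingOrder2} and Theorem~\ref{THM:PreserveMartinetHypersurface}); the paper's heavier machinery is aimed at those, with injectivity falling out for free. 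As a proof of the stated theorem alone, your argument is complete and valid.
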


It is worth noting that we also present an explicit local criterion for a germ of section $s \in \mathcal{O}(L)_x$ to be the contact Hamiltonian section associated to an infinitesimal contact transformation $X$, and an explicit local formula of $X$ for a given contact Hamiltonian function. As abuse of notation, we will not distinguish between $\omega$ and $\widetilde{\omega}$ if there is no risk of confusion. 

As applications of these results, we establish that (1) if a section of $L$ which belongs to the image of $\theta$ vanishes along $S$, then the vanishing order of it must be at least $2$, and (2) every infinitesimal contact transformation of a singular contact structure preserves the Martinet hypersurface, that is, every infinitesimal contact transformation is holomorphically tangent to the Martinet hypersurface. These phenomena are naturally expected when we consider singular contact structures.

\begin{corollary}[=Corollary~\ref{CORO:VanishingOrder1ImpliesVanishingOrder2}]
Let $\omega$ be a singular contact structure on $M$ of the first type with structurally smooth Martinet hypersurface $S$. If $I_S$ is the ideal sheaf of $S$, then we have
$$
\theta(\mfa(M,\omega)) \cap H^0(M, L \otimes I_S) \subseteq H^0(M, L \otimes I_S^2).
$$
In particular, the infinitesimal contact transformation $X$ corresponding to $f \in \theta(\mfa(M,\omega)) \cap H^0(M, L \otimes I_S)$ vanishes everywhere on $S$.
\end{corollary}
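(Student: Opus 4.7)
The plan is to argue locally near a point $x \in S$; away from $S$ there is nothing to prove. Since $S$ is structurally smooth, I would choose local coordinates $(z_0, z_1, \ldots, z_{2k})$ in which $S = \{z_0 = 0\}$ and $\omega$ takes a local normal form that exhibits the degeneracy of $\omega \wedge (d\omega)^k$ along $S$. In such coordinates, apply the explicit inverse formula that is promised in the discussion following Theorem~\ref{THM:EvaluatingIsInjective}: it expresses the components of $X$ as polynomials in $f$ and its first derivatives divided by some power of $z_0$ coming from the singular factor of $\omega \wedge (d\omega)^k$, and it is accompanied by the local criterion on $f$ (a polynomial identity in $f$ and its first derivatives) which must be satisfied in order for that candidate $X$ to be genuinely holomorphic rather than meromorphic along $S$.

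Now suppose $f \in \theta(\mfa(M,\omega)) \cap H^0(M, L \otimes I_S)$, and write $f = z_0 g$ locally. Substitute this into the compatibility criterion. Every term of the criterion in which no $\partial_{z_0}$ lands on $f$ automatically acquires a factor of $z_0$ from $f$ itself, and only the terms carrying a $\partial_{z_0} f = g + z_0 \partial_{z_0} g$ contribute at $z_0 = 0$. After restricting to $\{z_0 = 0\}$, the criterion reduces to an identity in $g\vert_S$ and its tangential derivatives from which one reads off that $g\vert_S$ must vanish; this forces $f \in I_S^2$ as required.

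For the ``in particular'' assertion, substitute $f = z_0^2 h$ back into the explicit formula for $X$. Each monomial in that formula carries at most one factor of $1/z_0$ from the singularity of $\omega$, and this factor is cancelled by at most one of the two $z_0$'s appearing in $f$ (or in a derivative of $f$), so every component of $X$ retains at least one residual factor of $z_0$. Consequently $X$ vanishes identically along $S$.

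The main obstacle is the middle step: one must verify that the leading term of the compatibility criterion at $z_0 = 0$ is a \emph{nonzero} multiple of $g\vert_S$, rather than a vacuous identity or an identity in the tangential derivatives of $g$ that $g\vert_S = 0$ does not automatically solve. This is precisely where the structurally smooth hypothesis on $S$ enters, as it provides the clean local normal form of $\omega$ in which the criterion can be inspected term by term; without it, the defining equation of $S$ would contaminate the leading order and obscure the isolation of $g\vert_S$.
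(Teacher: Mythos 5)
Your strategy is the same as the paper's: the paper's entire proof is to cite the membership criterion and the formula for $\theta^{-1}(f)$ from Theorem~\ref{THM:InfAutoFromFtn}, and you propose exactly that. But both halves of your argument have problems, one of which you flag yourself. For the inclusion into $I_S^2$: writing $f = zg$ and restricting the criterion to $S$, the terms $dz\wedge df$ and $f\beta$ contribute nothing (the first because $df \equiv g\,dz \pmod{z}$ dies against the outer $dz$, the second because $f|_S=0$), so only $-\pp[f]{z}\,\alpha$ survives with $\pp[f]{z}\big|_S = g$, and the criterion collapses to $g\cdot dz\wedge\alpha\wedge(d\alpha)^{k-1} = 0$ on $S$. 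You therefore need $\alpha\wedge(d\alpha)^{k-1}$ to be generically nonvanishing along $S$. For $k=1$ this is just the nonvanishing of the Pfaffian equation $\alpha=\omega|_S$, but for $k\ge 2$ it does not follow from structural smoothness or from conditions (1)--(2) of Theorem~\ref{THM:JZRealizability}, which constrain $(d\alpha)^k$ and $(d\alpha)^{k-1}\wedge d\beta$ rather than $\alpha\wedge(d\alpha)^{k-1}$ (for instance $\alpha=dx^1$, $\beta=dx^2+x^3dx^4$ on $\CC^4$ satisfies both conditions while $\alpha\wedge d\alpha\equiv 0$). You correctly name this as the main obstacle, but invoking the normal form does not by itself produce the needed nonvanishing, so the gap remains open in your write-up.

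The ``in particular'' step is where your argument concretely fails. For $f=z^2h$ the quantity $d_j=\eta_j\pp[f]{z}-f\pp[\eta_j]{z}-\pp[f]{x^j}$ equals $2z a_j h+O(z^2)$: the $z$-derivative strips one of the two factors of $z$, and the single remaining factor is exactly consumed by the simple zero of $\pf(W)=cz+o(z)$ along $S$. Hence $s_m|_S$ is proportional to $h$ times a coefficient of $\alpha\wedge(d\alpha)^{k-1}|_S$ and is generically nonzero, contrary to your claim that every component of $X$ retains a residual factor of $z_0$. You can see this in the paper's own Example~\ref{EXAM:SimpleCase1}: $f=z^2/2$ satisfies the membership criterion, and the unique $X$ with $\theta(X)=z^2/2$ is $X=\bigl(\tfrac{z^2}{2}+\tfrac{z x^1 e^{x^1x^2}}{2}\bigr)\partial_z+\tfrac{z x^1}{2}\partial_{x^1}+e^{x^1x^2}\partial_{x^2}$, whose restriction to $S=\{z=0\}$ is $e^{x^1x^2}\partial_{x^2}\neq 0$. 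What the formula does give (and what Theorem~\ref{THM:PreserveMartinetHypersurface} proves) is that the $\partial_z$-component of $X$ vanishes along $S$, i.e.\ $X|_S$ is tangent to $S$ --- not that $X|_S=0$. So the final clause cannot be obtained by your factor count, and the example indicates it needs to be re-examined rather than re-proved.
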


\begin{theorem}[=Theorem~\ref{THM:PreserveMartinetHypersurface}]
Let $\omega$ be a singular contact structure on $M$ of the first type with structurally smooth Martinet hypersurface $S$. If a holomorphic vector field $X$ is an infinitesimal contact transformation of $\omega$, then the restriction $X|_S$ to the Martinet hypersurface $S$ is a holomorphic tangent vector field on $S$.
\end{theorem}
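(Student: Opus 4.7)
The plan is to reduce the tangency statement to the algebraic assertion that $X(s)$ lies in the ideal $I_S$, for every local defining function $s$ of $S$; since $S$ is smooth and cut out by such an $s$ with $ds|_S \neq 0$, this condition is equivalent to $X|_S$ being holomorphically tangent to $S$. The argument is entirely local, so I will work in a chart where $\omega := \omega_i$ is a scalar-valued holomorphic $1$-form, $\mathcal{L}_X \omega = h\omega$ for some holomorphic function $h$, and $S = \{s = 0\}$ for a holomorphic $s$ with $ds|_S \neq 0$. Because the Martinet hypersurface is \emph{structurally smooth}, I can further arrange the local normal form
$$
\omega \wedge (d\omega)^k = s \cdot \Omega
$$
with $\Omega$ a nowhere-vanishing holomorphic $(2k+1)$-form on the chart.

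The main step is to compute $\mathcal{L}_X\bigl(\omega \wedge (d\omega)^k\bigr)$ in two different ways and compare. Using $\mathcal{L}_X \omega = h\omega$ together with $[\mathcal{L}_X, d] = 0$, one has $\mathcal{L}_X(d\omega) = dh \wedge \omega + h\,d\omega$, and the Leibniz rule for $\mathcal{L}_X$ across the wedge product yields
$$
\mathcal{L}_X\bigl(\omega \wedge (d\omega)^k\bigr) = (k+1)\,h\,\omega \wedge (d\omega)^k + k\,\omega \wedge (d\omega)^{k-1} \wedge dh \wedge \omega.
$$
The second term on the right vanishes because, after rearranging, it contains the factor $\omega$ twice. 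On the other hand, since $\Omega$ trivializes the canonical bundle on the chart, there is a holomorphic function $g$ with $\mathcal{L}_X \Omega = g\,\Omega$, and hence
$$
\mathcal{L}_X(s\,\Omega) = X(s)\,\Omega + s\,g\,\Omega.
$$
Equating the two computations of $\mathcal{L}_X\bigl(\omega\wedge(d\omega)^k\bigr)$ and dividing by the nowhere-vanishing $\Omega$ produces
$$
X(s) = s\bigl[(k+1)h - g\bigr] \in I_S,
$$
which is precisely the desired tangency of $X|_S$ to $S$.

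The only step that is not purely formal is the availability of the normal form $\omega \wedge (d\omega)^k = s\,\Omega$ with $\Omega$ nowhere vanishing; I expect this to be exactly the content of the hypothesis that the Martinet hypersurface is structurally smooth (so that $S$ is a smooth divisor along which $\omega \wedge (d\omega)^k$ has vanishing order exactly one). Once that normal form is in place, the whole proof is driven by the single identity $\omega \wedge \omega = 0$, and it makes transparent why the same computation also yields Corollary~\ref{CORO:VanishingOrder1ImpliesVanishingOrder2}: if $h$ itself vanishes on $S$, then the relation $X(s) = s[(k+1)h - g]$ forces additional vanishing of the associated Hamiltonian section.
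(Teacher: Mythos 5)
Your proof is correct, and it takes a genuinely different and considerably more economical route than the paper. The paper works in the Jakubczyk--Zhitomirskii normal form $\omega = dz + \alpha + z\beta$, invokes the explicit Cramer's-rule formula for the components $s_m$ of $\theta^{-1}(f)$ from Theorem~\ref{THM:InfAutoFromFtn}, and then carries out a page-long manipulation modulo $z^2$ of the identity $\left( d'f + f\beta - \pp[f]{z}\alpha\right)\wedge(d\alpha - \alpha\wedge\beta)^{k-1} = z\gamma$ to show that the $\partial/\partial z$-component of $X$ vanishes on $S=\{z=0\}$. You instead observe that the Martinet divisor is canonically attached to the structure: writing $\omega\wedge(d\omega)^k = H\,\Omega$ with $\Omega$ a nonvanishing volume form (this is exactly the paper's definition, and structural smoothness says $dH\neq 0$ on $S$, so $H$ is a genuine local defining function), the computation $\mathcal{L}_X(\omega\wedge(d\omega)^k) = (k+1)h\,\omega\wedge(d\omega)^k$ --- the cross term dying because it contains $\omega$ twice --- compared with $\mathcal{L}_X(H\Omega) = X(H)\Omega + Hg\Omega$ gives $X(H) = H[(k+1)h - g] \in I_S$. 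This is coordinate-free, needs neither the normal form nor the inversion formula, does not even use the first-type hypothesis, and makes the conceptual reason transparent (an infinitesimal symmetry preserves any divisor naturally associated to the structure). What the paper's longer argument buys is finer local information: it identifies precisely which component of $X$ vanishes in the adapted frame, which feeds into the subsequent corollary on $\mathcal{L}_{X|_S}\alpha$. One small caution: your closing aside that the same relation ``also yields Corollary~\ref{CORO:VanishingOrder1ImpliesVanishingOrder2}'' is not substantiated --- that corollary concerns the vanishing order of the Hamiltonian section $f=\omega(X)$ along $S$, whereas your identity involves the conformal factor $h$ and the divergence $g$, and no direct link between $h$ vanishing and $f$ vanishing is established; but this does not affect the proof of the theorem itself.
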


This paper is organized as follows. In Section~\ref{SEC:Preliminaries}, we recall preliminary things on nonsingular contact structures and the notion of Pfaffians of matrices. In Section~\ref{SEC:MainStatements}, we give the full statements of our main results. In Section~\ref{SEC:ProofOfLEM:InfAutoCondition} and Section~\ref{SEC:ProofOfTHM:InfAutoFromFtn}, we present the proofs of our main results. In Section~\ref{SEC:Applications}, we give a few elementary examples and several applications of our results, including the fact that the Martinet hypersurface is invariant under infinitesimal contact transformations. \\

{\noindent\bf Acknowledgement.} 
The author would like to thank Jun-Muk Hwang for suggesting the problem and providing valuable comments. The author was supported by the Institute for Basic Science (IBS-R032-D1-2025-a00) and by the National Research Foundation of Korea(NRF) grant funded by the Korea government(MSIT) (RS-2025-00556878).

\bigskip

\section{Preliminaries} \label{SEC:Preliminaries}

A (holomorphic) {\bf contact form} $\omega$ on $M$ is a collection of pairs $\{(U_i, \omega_i )\}_{i \in I}$, where $\{U_i\}_{i\in I}$ is an open cover of $M$ indexed by a set $I$ and each $\omega_i$ is a (holomorphic) $1$-form on $U_i$ satisfying
\begin{enumerate}
    \item $\omega_i \wedge (d\omega_i)^k$ is nowhere vanishing on $U_i$, and
    \item for any $i,j \in I$, there exists $f_{ij} \in \Gamma (U_i \cap U_j, \mathcal{O}_M^*)$ such that $\omega_i = f_{ij} \omega_j$.
\end{enumerate}
For two contact forms $\omega:= \{(U_i, \omega_i)\}_{i\in I}$ and $\mu := \{(V_j, \mu_j)\}_{j \in J}$, we say that they are {\bf equivalent} if for each $i\in I$ and $j \in J$, there exists a holomorphic function $h_{ij} \in \Gamma(U_i \cap V_j, \mathcal{O}_M^*)$ such that $\omega_i = h_{ij} \mu_j$ on $U_i \cap V_j$. A {\bf contact structure} on $M$ is an equivalence class of contact forms on $M$. Equivalently, a contact structure on $M$ can be given as a (holomorphic) subbundle $\mathcal{D}$ of the (holomorphic) tangent bundle $T^{1,0}M$ such that the map from $\bigwedge^2 \mathcal{D} to T^{1,0}M/\mathcal{D}$ which assigns the equivalence class of $[\eta, \xi]$ in $T^{1,0}M/\mathcal{D}$ to $\eta \wedge \xi$ is nondegenerate on $M$. For convenience, write $L:=T^{1,0}M/\mathcal{D}$.

Since $\omega_i$ defines a nontrivial linear functional on $L_x$ for each $x \in U_i$, we can find a section $s_i$ of $L$ on $U_i$ such that $\omega_i (s_i) \equiv 1$. Then $\widetilde{\omega} := s_i \otimes \omega_i$ is a globally defined $L$-valued $1$-form on $M$. Note that $\widetilde{\omega}$ depends only on the equivalence class of the contact structure $\omega$.

\begin{definition}
A (holomorphic) vector field $X$ on $M$ is called an {\bf infinitesimal contact transformation} of the contact structure $\omega$ if, for each $i \in I$, $\mathcal{L}_X \omega_i = h_i \omega_i$ for some $h_i \in \Gamma(U_i, \mathcal{O}_M)$. Denote by $\mfa (M, \widetilde{\omega})$ the set of infinitesimal contact transformations of $\omega$.
\end{definition}

For a infinitesimal contact transformation $X \in \mfa (M, \widetilde{\omega})$, the contact $1$-form $\widetilde{\omega}$ naturally gives a section of $L$ by $\widetilde{\omega}(X)$. Denote by $\theta$ this linear map from $\mfa(M,\widetilde{\omega})$ to $H^0 (M, L)$. As mentioned in the introduction, the map $\theta$ is a linear isomorphism, and an explicit formula for the preimage of a given $s \in H^0(M,L)$ is well-known.

\begin{theorem}
The map $\theta : \mfa(M, \widetilde{\omega}) \longrightarrow H^0(M,L)$ is a linear isomorphism. Furthermore, if $\omega_i (X) = f \in \Gamma(U_i, \mathcal{O}_M)$ and $U_i$ is sufficiently small so that
$$
\omega_i = dz + x^1 dx^{k+1} + x^2 dx^{k+2} + \cdots + x^k dx^{2k}
$$
in local coordinates $(z, x^1,\ldots, x^{2k})$, then $X$ is given by
$$
X =  \left( f - \sum_{i=1}^{k} x^i \pp[f]{x^i} \right) \pp{z} + \sum_{i=1}^{k}\left\{ \left(x^i\pp[f]{z} - \pp[f]{x^{k+i}}\right) \pp{x^i} + \pp[f]{x^i} \pp{x^{k+i}} \right\}
$$
on $U_i$.
\end{theorem}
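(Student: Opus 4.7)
The plan is to first prove that $\theta$ is injective by exploiting the nondegeneracy of $d\omega_i$ on $\mathcal{D}$. Suppose $\theta(X) = \widetilde{\omega}(X) = 0$; then $\omega_i(X) = 0$ on each $U_i$, so $X$ is a section of $\mathcal{D}$. By Cartan's magic formula,
$$
\mathcal{L}_X\omega_i = \iota_X d\omega_i + d(\iota_X\omega_i) = \iota_X d\omega_i,
$$
and the infinitesimal contact condition $\mathcal{L}_X\omega_i = h_i\omega_i$ forces $d\omega_i(X, Y) = h_i\omega_i(Y) = 0$ for every $Y \in \mathcal{D}$. Since $d\omega_i|_{\mathcal{D}}$ is a symplectic form by the contact condition, $X|_{U_i} = 0$, hence $X = 0$ on $M$.

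\textbf{Local formula and surjectivity.} On a Darboux chart where $\omega_i = dz + \sum_{j=1}^{k} x^j\, dx^{k+j}$, I would expand an unknown vector field as
$$
X = A\,\pp{z} + \sum_{j=1}^{k} B^j\,\pp{x^j} + \sum_{j=1}^{k} C^j\,\pp{x^{k+j}},
$$
and impose $\omega_i(X) = f$ and $\mathcal{L}_X\omega_i = h_i\omega_i$. The first relation gives $A = f - \sum_j x^j C^j$. For the second, I would use Cartan's formula together with $d\omega_i = \sum_j dx^j \wedge dx^{k+j}$ to obtain
$$
\mathcal{L}_X\omega_i \;=\; df + \sum_{j=1}^{k}\bigl(B^j\,dx^{k+j} - C^j\,dx^j\bigr),
$$
and then match this against $h_i\bigl(dz + \sum_j x^j\,dx^{k+j}\bigr)$ coefficient by coefficient in the Darboux basis. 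The $dz$ entry gives $h_i = \pp[f]{z}$; the $dx^j$ entries give $C^j = \pp[f]{x^j}$; and the $dx^{k+j}$ entries give $B^j = x^j\pp[f]{z} - \pp[f]{x^{k+j}}$. Substituting these values and the formula for $A$ back into the ansatz reproduces the formula in the statement, and manifestly produces an element of $\mfa(M,\widetilde{\omega})|_{U_i}$ whose contact Hamiltonian is $f$. Given a global $s \in H^0(M, L)$, the local preimages built on a Darboux atlas agree on overlaps by the injectivity already established, and assemble into a global $X$ with $\theta(X) = s$; linearity of $\theta^{-1}$ is then visible from the formula.

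\textbf{Main obstacle.} The only genuine computation is the coefficient matching in the second display above; care is needed because the $dz$-part of $\mathcal{L}_X\omega_i$ receives a contribution only from $df$, so the identification $h_i = \pp[f]{z}$ must be extracted before solving for $B^j$ and $C^j$ (whose defining equations then decouple). Once this step is executed cleanly the rest is formal: injectivity handles the gluing, linearity of $\theta^{-1}$ is read off from the formula, and independence of the choice of Darboux chart is automatic from the uniqueness of the local preimage.
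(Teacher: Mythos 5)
Your proposal is correct. Note that the paper itself gives no proof of this classical theorem---it is stated as a known preliminary with references to \cite{Ko}, \cite{Ly75}---so there is no in-paper argument to compare against; your argument is the standard one and each step checks out: injectivity from the nondegeneracy of $d\omega_i|_{\mathcal{D}}$, and the coefficient matching $h_i = \partial f/\partial z$, $C^j = \partial f/\partial x^j$, $B^j = x^j\,\partial f/\partial z - \partial f/\partial x^{k+j}$, $A = f - \sum_j x^j\,\partial f/\partial x^j$ does reproduce the stated formula, with the three families of equations decoupling exactly as you say. It is worth remarking that your method (decompose $X$ in a coframe adapted to $\omega_i$, apply Cartan's formula, and match coefficients) is precisely the template the paper later follows in the singular setting in Sections~\ref{SEC:ProofOfLEM:InfAutoCondition} and \ref{SEC:ProofOfTHM:InfAutoFromFtn}, except that there the coframe $\{\omega, dx^1,\ldots,dx^{2k}\}$ dual to $\{Z, X_1,\ldots,X_{2k}\}$ replaces the Darboux coframe, and the resulting linear system for the $\ker(\omega)$-components no longer decouples and must be solved by Cramer's rule and Pfaffians.
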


As an application of the above formula, it can be shown that the group of contact transformations with compact support acts $k$-fold transitively on $M$ (See \cite{Boo69}, \cite{Hat66}, and \cite{Ko} for details).

On a singular contact structure, the map $\theta$ fails to be an isomorphism since $\omega \wedge (d\omega)^k$ vanishes on a hypersurface.
For the definition of singular contact structures, we relax the nowhere vanishing assumption in the definition of a contact structure.

\begin{definition}
     A collection of pairs $\omega = \{(U_i, \omega_i)\}$, as in the definition of contact structures, is called a {\bf singular contact form} on $M$ if it satisfies
     \begin{enumerate}
    \item $\omega_i \wedge (d\omega_i)^k$ vanishes only on a nowhere dense subset of $U_i$, and
    \item for any $i,j \in I$, there exists $f_{ij} \in \Gamma (U_i \cap U_j, \mathcal{O}_M^*)$ such that $\omega_i = f_{ij} \omega_j$ on $U_i \cap U_j$.
     \end{enumerate}
\end{definition}
A singular contact structure is defined similarly to a contact structure.
Let $S$ be the locus of noncontact points on $M$, that is,
$$
S \cap U_i = \{ x \in M \, : \, (\omega_i \wedge (d\omega_i)^k) (x) = 0\}
$$
for each $i \in I$. We call $S$ the {\bf Martinet hypersurface}. For a local holomorphic $(2k+1)$-form $\Omega_i$ on $U_i$, we can find a holomorphic function $H_i$ on $U_i$ such that
$$
\omega_i \wedge (d\omega_i)^k = H_i \Omega_i.
$$
Note that $H_i$ is independent of the choice of a holomorphic $(2k+1)$-form up to nowhere vanishing holomorphic functions. We say that $S$ is {\bf structurally smooth} at $x \in U_i \subseteq M$ if $dH_i (x) \neq 0$.

\begin{definition}
     A singular contact form $\omega$ is said to {\bf have singularities of the first type} if $\omega$ never vanishes on the Martinet hypersurface (and hence on the whole manifold $M$). Otherwise, we say $\omega$ {\bf has singularities of the second type}.
\end{definition}

As mentioned in the introduction, we focus on singular contact structures of the first type with structurally smooth Martinet hypersurface in this article.
Let us now consider a local situation. For the sake of simplicity, we may assume that $M$ is $\CC^{2k+1}$ with a singular contact form $\omega$ and that
$$
\omega \wedge (d\omega)^k = H dz \wedge dx^1 \wedge \cdots \wedge dx^{2k},
$$
where $(z, x^1,\ldots ,x^{2k})$ is a coordinate system of $\CC^{2k+1}$ and $H(0,\ldots,0) = 0$.

A local Pfaffian equation $\alpha$ on $\CC^{2k}$ is said to be {\bf realizable} ({\bf realizable with structurally smooth $S$}, respectively) if there exists a nowhere vanishing $1$-form $\omega$ on $\CC^{2k+1}$ such that $S = \CC^{2k} := \{0\} \times \CC^{2k}$ and $\alpha = \omega|_{\CC^{2k}}$ (structurally smooth $S = \CC^{2k}$ and $\alpha = \omega|_{\CC^{2k}}$ respectively).

According to \cite[Theorem 5.1]{JZ01}, we have the following characterization of realizability.
\begin{theorem} \label{THM:JZRealizability}
A local Pfaffian equation $\alpha$ is realizable with structurally smooth $S$ if and only if there exists a nonvanishing $1$-form $\beta$ on $\CC^{2k}$ such that
\begin{enumerate}
\item $(d\alpha)^k = k (d\alpha)^{k-1} \wedge \alpha \wedge \beta$ on $S$,
\item $(d\alpha)^{k-1} \wedge d\beta - (k-1)(d\alpha)^{k-2} \wedge \alpha \wedge \beta \wedge d\beta $ does not vanish at the origin.
\end{enumerate}
In this case, the nowhere vanishing $1$-form $\omega$ can be written as $\omega = dz + \alpha + z \beta$. 
\end{theorem}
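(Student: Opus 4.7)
My approach is to prove both implications by computing $\omega \wedge (d\omega)^k$ explicitly for a $1$-form of the shape $dz + \alpha + z\beta$, and using this single expansion both to construct a realization from $\beta$ and to extract $\beta$ from a given realization.

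For the sufficiency direction I would set $\omega := dz + \alpha + z\beta$; nowhere vanishing is immediate from the $dz$ summand. Splitting $d\omega = \eta + dz \wedge \beta$ with $\eta := d\alpha + z\, d\beta$ and using $(dz \wedge \beta)^2 = 0$, one obtains
\begin{equation*}
(d\omega)^k = \eta^k + k\, \eta^{k-1} \wedge dz \wedge \beta.
\end{equation*}
Wedging with $\omega$ and discarding the piece $(\alpha + z\beta) \wedge \eta^k$ as a $(2k+1)$-form in only $2k$ horizontal variables, a single sign from moving $dz$ past a $(2k-1)$-form gives
\begin{equation*}
\omega \wedge (d\omega)^k \;=\; dz \wedge \bigl[ \eta^k - k(\alpha + z\beta) \wedge \eta^{k-1} \wedge \beta \bigr].
\end{equation*}
At $z = 0$ the bracket reduces to $(d\alpha)^k - k\alpha \wedge (d\alpha)^{k-1} \wedge \beta$, which vanishes by condition (1); hence $\{z = 0\}$ is contained in the Martinet locus. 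Differentiating the bracket in $z$ at the origin, the $\beta$ arising from $\partial_z(\alpha + z\beta)$ is killed by the trailing $\beta$ (via $\beta \wedge (d\alpha)^{k-1} \wedge \beta = 0$), leaving precisely $k$ times the $2k$-form of condition~(2). Thus $\partial_z H(0) \neq 0$, so $dH(0) \neq 0$; the implicit function theorem pins the Martinet locus to $\{z = 0\}$ with structural smoothness, and $\omega|_{S} = \alpha$ is clear.

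For the necessity direction I would start with a realizing $\omega$ and reduce it to the normal form $\omega = dz + \alpha + z\beta$ (with $\beta$ a $1$-form on $\CC^{2k}$) via a Pfaffian rescaling together with a local coordinate change preserving $S$. Decomposing $\omega = A(z,x)\, dz + \alpha + z\gamma(z,x)$ (using $\omega|_{S} = \alpha$), one arranges $A \equiv 1$ by choosing $\partial_z$ transverse to $\ker \omega_0$ (possible since $\omega$ is nowhere vanishing) and rescaling by the unit $A^{-1}$; setting $\beta(x) := \gamma(0,x)$, the sufficiency computation read in reverse forces condition (1) from $H(0,x) \equiv 0$ and condition~(2) from $\partial_z H(0) \neq 0$, while the nonvanishing of $\beta$ is read off from condition~(2) together with the structural smoothness of $S$.

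The main obstacle I anticipate is the normal-form reduction: one must arrange the rescaling and the local coordinate change so that simultaneously the $dz$-coefficient becomes $1$, the horizontal restriction to $S$ stays equal to $\alpha$, and all higher-order $z$-dependence in the horizontal part of $\omega$ is absorbed into the linear term $z\beta(x)$ with $\beta$ depending only on $x$. Once this normalization is in place, both implications collapse to a single application of the explicit formula for $\omega \wedge (d\omega)^k$ computed in the sufficiency direction.
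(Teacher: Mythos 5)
First, note that the paper does not prove this statement at all: it is quoted from Jakubczyk--Zhitomirskii \cite{JZ01} (their Theorem 5.1), so there is no in-paper proof to compare against. Judged on its own merits, your sufficiency direction is correct and essentially complete. The expansion $\omega \wedge (d\omega)^k = dz \wedge \bigl[ \eta^k - k(\alpha+z\beta)\wedge \eta^{k-1}\wedge\beta \bigr]$ with $\eta = d\alpha + z\,d\beta$ is right; its value at $z=0$ is the form in condition (1), and its $z$-derivative at the origin equals $k$ times the form in condition (2), the cross term dying because $\beta\wedge\eta^{k-1}\wedge\beta = 0$. Hence $H$ vanishes on $\{z=0\}$ and $\partial_z H(0)\neq 0$, which identifies $S$ with $\{z=0\}$ and gives structural smoothness.

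The necessity direction, however, has a genuine gap, and it is exactly the step you flag as ``the main obstacle'': the normal-form reduction is the entire content of the cited theorem, not a technicality to be deferred. Two concrete problems. (i) Arranging the $dz$-coefficient to be identically $1$ while keeping $S=\{z=0\}$ and $\omega|_S = \alpha$ is not a mere rescaling: if $\omega(\partial_z)$ vanishes at the origin you first need a coordinate change fixing $S$ pointwise (using $\alpha\neq 0$ to move $\partial_z$ off $\ker\omega$), and the subsequent division by a unit must be normalized so as not to disturb $\omega|_S$; this can be arranged but is not supplied. (ii) More seriously, once you reach $\omega = dz + \alpha + z\gamma(z,x)$ and set $\beta := \gamma(0,x)$, condition (1) does follow from $H|_{z=0}=0$, but the computation of $\partial_z H(0)$ now acquires an extra term proportional to $\alpha\wedge(d\alpha)^{k-1}\wedge \partial_z\gamma(0,\cdot)$ which is absent from condition (2). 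You cannot absorb it by redefining $\beta$: any modification $\beta\mapsto\beta+\delta$ preserving condition (1) must satisfy $(d\alpha)^{k-1}\wedge\alpha\wedge\delta = 0$ on $S$, which annihilates precisely the kind of correction you would need. So ``reading the sufficiency computation in reverse'' does not yield (2) unless you first prove that the realization can be chosen with $\gamma$ independent of $z$ (at least to first order in $z$), and that is the nontrivial normal-form assertion of \cite{JZ01}. A smaller loose end: the nonvanishing of the constructed $\beta$ is asserted but not derived.
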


\begin{remark}
In both cases of nonsingular contact structures and singular contact structures of the first type, normalization theorems such as Darboux's theorem and Theorem~\ref{THM:JZRealizability} play a crucial role in studying the contact Hamiltonian map. Singularities and normalizations of singular contact structures of the first type have been studied extensively in \cite{M70, JZ95, JZ01}. Meanwhile, singular contact structures of the second type have been studied in \cite{Ly75, JTN24}.
\end{remark}

From now on, we assume that $\alpha$ is realizable with structurally smooth $S = \CC^{2k}$.
Denote by $\mathfrak{a}(\CC^{2k+1}, \omega)_0$ the set of germs of infinitesimal contact transformations near the origin. In other words, $\mathfrak{a}(\CC^{2k+1},\omega)_0$ is the set of germs of vector fields $X$ defined near the origin such that
$$
\mathcal{L}_X \omega = h  \omega
$$
for some $h \in \mathcal{O}_{2k+1,0}$. Define the map 
$$
\theta : \mathfrak{a}(\CC^{2k+1},\omega)_0 \longrightarrow \mathcal{O}(L)_0 \simeq \mathcal{O}_{\CC^{2k+1},0}
$$
by $\theta(X) = \omega (X)$ for each $X \in \mathfrak{a}(\CC^{2k+1},\omega)_0$.
Note that one of the purposes of this article is to find a condition for $X$ to be an element of $\mathfrak{a}(\CC^{2k+1},\omega)_0$ and to describe the image of the above map in terms of $\alpha$ and $\beta$. To do this, let us briefly recall the notion of Pfaffians and its relation with exterior products.

Let $W = [\xi_{i,j}]$ be a skew-symmetric $2k \times 2k$ matrix and consider the associated $2$-form $\xi = \sum_{i,j}{\xi_{i,j} dx^i \wedge dx^j}$ on $\CC^{2k}$. For positve integers $1 \le m \le k$, denote by $\mathfrak{S}_{m,k}$ the set of injective maps $\sigma$ from $\{1,\ldots, m\}$ to $\{1,\ldots, k\}$. Then $\xi^2$ can be written as
$$
\xi^2 = \sum_{\sigma \in \mathfrak{S}_{4,2k}} \xi_{\sigma(1),\sigma(2)}\xi_{\sigma(3),\sigma(4)} dx^{\sigma(1)} \wedge dx^{\sigma(2)} \wedge dx^{\sigma(3)} \wedge dx^{\sigma(4)}.
$$
More generally, for $1 \le m \le k$, we have
$$
\xi^m = \sum_{\sigma \in \mathfrak{S}_{2m,2k}} \left( \prod_{i=1}^m \xi_{\sigma(2i-1),\sigma(2i)} \right) dx^{\sigma(1)} \wedge \cdots \wedge dx^{\sigma(2m)}.
$$
In the case when $m=k$, after rearranging $dx^{\sigma(1)} \wedge \cdots \wedge dx^{\sigma(2k)}$ into $dx^1 \wedge \cdots \wedge dx^{2k}$, we have
$$
\xi^k = \left( \sum_{\sigma \in \mathfrak{S}_{2k}} \mathrm{sgn}(\sigma) \prod_{i=1}^{k} \xi_{\sigma(2i-1),\sigma(2i)} \right) dx^1 \wedge \cdots \wedge dx^{2k},
$$
where $\mathfrak{S}_{2k} = \mathfrak{S}_{2k,2k}$ and $\mathrm{sgn}(\sigma)$ is the signature of a permutation $\sigma$. The coefficient of $dx^1 \wedge \cdots \wedge dx^{2k}$ divided by $2^k k!$ in the above is called the {\bf Pfaffian} of a skew-symmetric $2k \times 2k$ matrix $W = [\xi_{i,j}]$. We denote by $\pf(W)$ the Pfaffian of $W$. Since the Pfaffian of a skew-symmetric matrix of order $2k$ is defined via permutations, there is a close relation between the Pfaffian of a skew-symmetric matrix and its determinant. The following are well-known facts in linear algebra.
\begin{proposition}
With notation as above, The Pfaffian of $W$ has the following properties.
\begin{enumerate}
\item The determinant of $W$ is the square of the Pfaffian of $W$, that is, $\det(W) = \pf(W)^2$.
\item There is an expansion formula for $\pf(W)$. For any fixed $i \in \{1,\ldots, 2k\}$,
$$
\pf(W) = \sum_{\substack{j=1 \\ j \neq i}}^{2k} (-1)^{i+j+1+H(i-j)} \xi_{i,j} \pf(W_{\hat{\imath}\hat{\jmath}}^{\hat{\imath}\hat{\jmath}}).
$$
Here, $H(t)$ is the Heavisde function and $W_{\hat{\imath}\hat{\jmath}}^{\hat{l}\hat{m}}$ is the $(2k-2) \times (2k-2)$ matrix obtained from removing the $i$-th and $j$-th rows and the $l$-th and $m$-th columns. Note that $W_{\hat{\imath}\hat{\jmath}}^{\hat{\imath}\hat{\jmath}}$ is a skew-symmetric $(2k-2) \times (2k-2)$ matrix.
\item The cofactor $(-1)^{i+j} \det( W_{\hat{\imath}}^{\hat{\jmath}})$ with respect to $(i,j)$ can be obtained from the Pfaffian cofactor with respect to $(i,j)$, that is,
$$
(-1)^{i+j} \det( W_{\hat{\imath}}^{\hat{\jmath}}) = (-1)^{i+j+1+H(i-j)}\pf(W_{\hat{\imath}\hat{\jmath}}^{\hat{\imath}\hat{\jmath}}) \pf(W).
$$
\end{enumerate}
\end{proposition}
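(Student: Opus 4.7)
My plan is to derive the three parts in the order $(1) \Rightarrow (2) \Rightarrow (3)$: (1) via a canonical-form reduction, (2) as a combinatorial unfolding of the definition, and (3) as a consequence of differentiating (1) and inserting the expansion from (2). For (1), I would first establish the transformation law $\pf(P^T W P) = \det(P)\,\pf(W)$ for any $2k \times 2k$ matrix $P$. Reading the Pfaffian out of the associated $2$-form $\xi$ as in the excerpt, the linear substitution encoded by $P$ transforms $\xi$ into the $2$-form attached to $P^T W P$, while $dx^1 \wedge \cdots \wedge dx^{2k}$ is pulled back to $\det(P)\,dx^1 \wedge \cdots \wedge dx^{2k}$; extracting the coefficient of the top form gives the claim. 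Combined with the classical fact that any skew-symmetric matrix over $\CC$ is congruent to a block-diagonal matrix whose $k$ blocks are $2\times 2$ skew blocks with off-diagonal entries $\pm \lambda_i$, a direct computation on this canonical form yields $\det = \prod_i \lambda_i^2$ and $\pf = \prod_i \lambda_i$, from which (1) follows; since both $\det(W)$ and $\pf(W)^2$ are polynomials in the independent entries $\{\xi_{ij} : i < j\}$, it is enough to verify the identity generically.

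For (2), I would unfold the definition of $\pf(W)$ as a signed sum over perfect matchings of $\{1,\ldots,2k\}$. Every such matching pairs the fixed index $i$ with a unique $j \neq i$, and the remaining $k-1$ edges form a perfect matching of $\{1,\ldots,2k\}\setminus\{i,j\}$, corresponding to a Pfaffian monomial of the $(2k-2) \times (2k-2)$ skew matrix $W_{\hat{\imath}\hat{\jmath}}^{\hat{\imath}\hat{\jmath}}$. Grouping the defining sum according to this partner $j$ produces the claimed expansion, once one accounts for the sign needed to move the pair $(i,j)$ into the leading position of the underlying permutation; this bookkeeping yields the factor $(-1)^{i+j+1+H(i-j)}$.

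For (3), I would differentiate the identity $\det(W) = \pf(W)^2$ with respect to the independent entry $\xi_{ij}$. Because $\xi_{ij}$ sits at position $(i,j)$ with sign $+1$ and at position $(j,i)$ with sign $-1$, the standard cofactor expansion of $\det$ gives $\partial_{\xi_{ij}}\det(W) = (-1)^{i+j}\det(W_{\hat{\imath}}^{\hat{\jmath}}) - (-1)^{i+j}\det(W_{\hat{\jmath}}^{\hat{\imath}})$, which simplifies via the skew-symmetry identity $\det(W_{\hat{\jmath}}^{\hat{\imath}}) = -\det(W_{\hat{\imath}}^{\hat{\jmath}})$ to $2(-1)^{i+j}\det(W_{\hat{\imath}}^{\hat{\jmath}})$. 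On the other side, $\partial_{\xi_{ij}}\pf(W)^2 = 2\pf(W)\,\partial_{\xi_{ij}}\pf(W)$, and (2) identifies $\partial_{\xi_{ij}}\pf(W)$ with the Pfaffian cofactor carrying exactly the sign in the statement. Equating the two expressions yields (3). The main obstacle throughout is sign bookkeeping — tracking the Heaviside factor through the permutation analysis in (2), and ensuring that the $\xi_{ij}$ and $\xi_{ji}$ contributions combine correctly via skew-symmetry in (3); beyond this, no ingredients outside standard linear algebra are required.
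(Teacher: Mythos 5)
The paper does not prove this proposition: it is stated as a collection of ``well-known facts in linear algebra'' and used as a black box, so there is no in-paper argument to compare against. Your proposal is the standard textbook derivation and it is correct. The transformation law $\pf(P^T W P)=\det(P)\pf(W)$ read off from the $2$-form interpretation, together with reduction to the skew block-diagonal canonical form and a Zariski-density (genericity) argument, is exactly how (1) is usually proved; grouping perfect matchings by the partner of the fixed index $i$ gives (2); and differentiating $\det(W)=\pf(W)^2$ in the independent entry $\xi_{ij}$ gives (3). Two details in your sketch that are worth making explicit, since they are where the argument could silently go wrong: first, in (3) the identity $\det(W_{\hat{\jmath}}^{\hat{\imath}})=-\det(W_{\hat{\imath}}^{\hat{\jmath}})$ uses that $(W_{\hat{\jmath}}^{\hat{\imath}})^{T}=(W^{T})_{\hat{\imath}}^{\hat{\jmath}}=-W_{\hat{\imath}}^{\hat{\jmath}}$ and that the deleted matrix has \emph{odd} size $2k-1$, so the sign $(-1)^{2k-1}=-1$ comes out correctly; second, the identification $\partial_{\xi_{ij}}\pf(W)=(-1)^{i+j+1+H(i-j)}\pf(W_{\hat{\imath}\hat{\jmath}}^{\hat{\imath}\hat{\jmath}})$ requires observing that none of the minors $\pf(W_{\hat{\imath}\hat{l}}^{\hat{\imath}\hat{l}})$ appearing in the row-$i$ expansion involve $\xi_{ij}$ (row $i$ and column $i$ are both removed, so both occurrences of that variable disappear), hence $\xi_{ij}$ enters $\pf(W)$ linearly through a single term. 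With those two checks your computation closes, and the signs match the Heaviside convention in the statement.
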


\bigskip

\section{Main statements} \label{SEC:MainStatements}

In this section, we present our main results: the theorem mentioned in the introduction and its local version with explicit formula.

\begin{theorem} \label{THM:EvaluatingIsInjective}
     Let $\omega = \{\omega_i\}$ be a singular contact structure of the first type with structurally smooth Martinet hypersurface and $L$ the quotient bundle $T^{1,0}M/\mathrm{Ker}(\omega)$. Then the the sequence of sheaves
     $$
     0 \longrightarrow \mathfrak{a}(M,\widetilde \omega) \stackrel{\theta}{\longrightarrow} \mathcal{O}(L)
     $$
     is exact.
\end{theorem}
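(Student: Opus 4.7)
The plan is to localize to a chart $U_i$ on which $\widetilde{\omega}$ is represented by a single nowhere-vanishing $1$-form $\omega_i$, and to prove that any holomorphic $X$ on $U_i$ satisfying $\mathcal{L}_X \omega_i = h_i \omega_i$ and $\omega_i(X) = 0$ must vanish. Since $\omega$ has singularities of the first type, $\omega_i$ is nowhere zero, so $\mathcal{D}_i := \ker(\omega_i)$ is a genuine rank-$2k$ holomorphic subbundle of $T^{1,0}U_i$ on all of $U_i$, including over the Martinet hypersurface $S$. The condition $\omega_i(X) = 0$ therefore simply says that $X$ is a section of $\mathcal{D}_i$.

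The second ingredient is Cartan's magic formula. Using $\iota_X \omega_i = 0$, we get
$$
h_i \omega_i \;=\; \mathcal{L}_X \omega_i \;=\; d(\iota_X \omega_i) + \iota_X d\omega_i \;=\; \iota_X d\omega_i.
$$
Pairing with any $Y \in \mathcal{D}_i$ yields $d\omega_i(X,Y) = h_i \omega_i(Y) = 0$. In other words, $X$ lies in the radical of the $2$-form $d\omega_i\big|_{\mathcal{D}_i}$ on the fibers of $\mathcal{D}_i$.

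Now I would use the standard linear-algebraic fact that $\omega_i \wedge (d\omega_i)^k$ being nonzero at a point $x$ is equivalent to $d\omega_i|_{\mathcal{D}_{i,x}}$ being a symplectic form on the $2k$-dimensional space $\mathcal{D}_{i,x}$. On $U_i \setminus S$ this holds by definition of $S$, so the radical of $d\omega_i|_{\mathcal{D}_i}$ is trivial there, forcing $X = 0$ on $U_i \setminus S$. Because $S$ is a proper analytic subvariety of $M$ (nowhere dense by the definition of a singular contact form) and $X$ is holomorphic, $X$ vanishes identically on $U_i$ by continuity. Gluing over the cover $\{U_i\}$ (or equivalently, running the same argument stalk-by-stalk) gives injectivity of $\theta$ as a morphism of sheaves.

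The argument is essentially the classical contact-geometric injectivity proof, with the singular locus handled by analytic continuation. The only real subtlety is conceptual rather than technical: one must recognize that the first-type hypothesis is precisely what guarantees that $\mathcal{D} = \ker(\omega)$ still makes sense as a subbundle across $S$, so the condition $\theta(X) = 0$ continues to translate into $X \in \mathcal{D}$ even on the Martinet hypersurface. Note that structural smoothness of $S$ is not strictly needed for injectivity; it enters only in the stronger local formulas of Section~\ref{SEC:MainStatements}. I expect no substantial obstacle beyond keeping this picture straight.
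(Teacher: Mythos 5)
Your proof is correct, but it takes a genuinely different and more elementary route than the paper. The paper reduces to the local normal form $\omega = dz + \alpha + z\beta$ of Theorem~\ref{THM:JZRealizability} (which is where structural smoothness enters), expands $\mathcal{L}_X\omega = h\omega$ into the linear system (\ref{EQN:InfAutoConditionExpanded}) for the coefficients $s_m$ of $X$ in a frame of $\ker(\omega)$, and obtains injectivity from the uniqueness of the Cramer's-rule solution, using the Pfaffian computation $\det(W) = c^2z^2 + \mathrm{o}(z^2)$ with $c(0)\neq 0$ to see that $\det(W)$ is not identically zero. You instead argue directly: $\theta(X)=0$ puts $X$ into $\ker(\omega_i)$, which is a genuine rank-$2k$ subbundle by the first-type hypothesis; Cartan's formula then puts $X$ into the radical of $d\omega_i|_{\ker\omega_i}$; that radical is trivial off the Martinet hypersurface by the standard equivalence between $\omega_i\wedge(d\omega_i)^k\neq 0$ and nondegeneracy of $d\omega_i$ on $\ker\omega_i$; and the identity theorem extends the vanishing of the holomorphic field $X$ across the nowhere dense analytic set $S$. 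Your route avoids the normal form entirely and, as you correctly observe, shows that structural smoothness is not needed for injectivity --- it is needed only for the explicit inversion formula and the characterization of the image in Theorem~\ref{THM:InfAutoFromFtn}. What the paper's heavier machinery buys is precisely those stronger statements (the formula for $\theta^{-1}(f)$ and the criterion for $f$ to lie in the image), of which injectivity is then a by-product.
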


To prove the above theorem, it suffices to prove that it is injective locally, as in the nonsingular case. Let $(z, x^1,\ldots, x^{2k})$ be a local coordinate system. The germ of a singular contact structure $\omega$ of the first kind with structurally smooth Martinet hypersurface can be written as
$$
\omega = dz + \alpha + z\beta,
$$
where $\alpha = \sum_{i=1}^{2k} a_i(x) dx^i$ and $\beta = \sum_{i=1}^{2k} b_i(z,x) dx^i$ are $1$-forms as in Theorem~\ref{THM:JZRealizability}. Being a nowhere vanishing $1$-form, the singular contact form $\omega$ decomposes $T^{1,0}\CC^{2k+1}$ into two parts, the line generated by $Z := \partial / \partial z$ and the kernel of $\omega$. With this decomposition, write a holomorphic vector field $X$ as
$$
X = fZ + \sum_{i=1}^{2k}s_i \left(\pp{x^i} - (a_i + zb_i)Z \right) \in \langle Z \rangle \oplus \ker (\omega)_0.
$$
In this setting, Theorem~\ref{THM:EvaluatingIsInjective} immediately follows from the following two results.
\begin{lemma} \label{LEM:InfAutoCondition}
    With notation as above, the germ of a vector field $X$ at the origin is an infinitesimal contact transformation if and only if
    $$
        \imath_Z \imath_X (\omega \wedge d\omega) = \imath_Z ( f d\omega + \omega \wedge df).
    $$
    Furthermore, in this case, the Lie derivative of $\omega$ with respect to $X$ is given by
    $$
        \mathcal{L}_X \omega = \left(  \pp[f]{z} -\sum_{m=1}^{2k}{ s_m b_m  } {- z\sum_{m=1}^{2k}{ s_m \pp[b_m]{z} } } \right) \omega.
    $$
\end{lemma}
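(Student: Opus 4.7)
The plan is to apply Cartan's magic formula and then transform the condition $\mathcal{L}_X\omega = h\omega$ into an algebraic identity among differential forms that hinges only on the normalization $\omega(Z) = 1$.

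First I would write $\mathcal{L}_X\omega = \imath_X d\omega + d(\imath_X\omega) = \imath_X d\omega + df$, so that $X$ is an infinitesimal contact transformation if and only if the $1$-form $\mathcal{L}_X\omega$ is a pointwise multiple of $\omega$, which in turn (since $\omega$ is nowhere vanishing) is equivalent to $\omega \wedge \mathcal{L}_X\omega = 0$. The Leibniz rule for the interior product gives
$$
\imath_X(\omega \wedge d\omega) = f\,d\omega - \omega \wedge \imath_X d\omega,
$$
so rearranging yields $\omega \wedge \mathcal{L}_X\omega = f\,d\omega + \omega \wedge df - \imath_X(\omega \wedge d\omega)$.

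The key observation is that $\omega(Z) = 1$, since both $\alpha$ and $\beta$ involve only the $dx^i$'s. Applying Leibniz once more, for any $1$-form $\eta$,
$$
\imath_Z(\omega \wedge \eta) = \omega(Z)\,\eta - \eta(Z)\,\omega = \eta - \eta(Z)\,\omega,
$$
so $\imath_Z(\omega \wedge \eta) = 0$ forces $\eta = \eta(Z)\,\omega$ and hence $\omega \wedge \eta = 0$; the converse is automatic. Thus the two conditions $\omega \wedge \eta = 0$ and $\imath_Z(\omega \wedge \eta) = 0$ coincide for $1$-forms $\eta$. Combining this with the previous paragraph, the equation $\omega \wedge \mathcal{L}_X\omega = 0$ is equivalent to the stated identity $\imath_Z \imath_X(\omega \wedge d\omega) = \imath_Z(f\,d\omega + \omega \wedge df)$.

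For the explicit form of $\mathcal{L}_X\omega$, once $X$ is known to be an infinitesimal contact transformation with $\mathcal{L}_X\omega = h\omega$, contracting with $Z$ and using $\omega(Z) = 1$ recovers
$$
h = (\mathcal{L}_X\omega)(Z) = \pp[f]{z} + d\omega(X, Z).
$$
I would then expand $d\omega = d\alpha + dz \wedge \beta + z\,d\beta$ and evaluate on $(X, Z)$ using $X = fZ + \sum_i s_i\bigl(\pp{x^i} - (a_i + zb_i)Z\bigr)$. Since $d\alpha$ contains only $dx^i \wedge dx^j$ terms, it vanishes on $(X,Z)$; the $dz \wedge dx^i$ contributions of the remaining two summands give $(dz \wedge \beta)(X,Z) = -\beta(X) = -\sum_m s_m b_m$ and $z\,d\beta(X,Z) = -z\sum_m s_m\,\pp[b_m]{z}$, matching the formula in the lemma. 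The main obstacle is conceptual rather than computational: one must recognize that the seemingly weaker condition obtained by contracting with $Z$ is in fact equivalent to the full condition $\omega \wedge \mathcal{L}_X\omega = 0$, and this equivalence rests entirely on the normalization $\omega(Z) = 1$ coming from the splitting $T^{1,0}\CC^{2k+1} = \langle Z \rangle \oplus \ker(\omega)$.
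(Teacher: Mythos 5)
Your argument is correct, and for the main equivalence it takes a genuinely different, more invariant route than the paper. The paper also starts from Cartan's formula $\mathcal{L}_X\omega = \imath_X d\omega + df$ and the splitting $X = fZ + \sum_i s_i X_i$, but it then expands $\imath_X d\omega + df$ explicitly in the dual coframe $\{\omega, dx^1,\ldots,dx^{2k}\}$ of $\{Z, X_1,\ldots,X_{2k}\}$, computing $d\alpha(X,X_l)$, $(dz\wedge\beta)(X,X_l)$ and $z\,d\beta(X,X_l)$ term by term; the infinitesimal-contact condition becomes the vanishing of the $2k$ coefficients of the $dx^l$, and the paper then asserts that this scalar system repackages into the single identity $\imath_Z\imath_X(\omega\wedge d\omega)=\imath_Z(f\,d\omega+\omega\wedge df)$. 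You instead note that the condition is $\omega\wedge\mathcal{L}_X\omega=0$, use the Leibniz rule to get $\omega\wedge\mathcal{L}_X\omega = f\,d\omega+\omega\wedge df-\imath_X(\omega\wedge d\omega)$, and observe that the normalization $\omega(Z)=1$ gives $\imath_Z(\omega\wedge\eta)=\eta-\eta(Z)\,\omega$ for any $1$-form $\eta$, so that $\imath_Z(\omega\wedge\eta)=0$ and $\omega\wedge\eta=0$ are equivalent; this is cleaner, actually supplies a proof of the repackaging step that the paper only states, and makes transparent that the multiplier is $h=(\mathcal{L}_X\omega)(Z)$. What the paper's coordinate computation buys is the explicit system of $2k$ scalar equations in the unknowns $s_m$, which is not needed for the lemma itself but is precisely the input for the Cramer's-rule and Pfaffian argument in the proof of Theorem~\ref{THM:InfAutoFromFtn}. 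Your evaluation of $h=\pp[f]{z}+d\omega(X,Z)$, with $d\alpha(X,Z)=0$, $(dz\wedge\beta)(X,Z)=-\sum_m s_m b_m$ and $z\,d\beta(X,Z)=-z\sum_m s_m \pp[b_m]{z}$, agrees with the paper's computation of the $\omega$-coefficient.
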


\begin{theorem} \label{THM:InfAutoFromFtn}
     With notation as above, the map $\theta:\mathfrak{a}(\CC^{2k+1},\omega)_0 \to \mathcal{O}_{\CC^{2k+1},0}$ is injective. The germ of a holomorphic function $f \in \mathcal{O}_{\CC^{2k+1},0}$ is in the image of the map $\theta$ if and only if the form
    $$
        dz \wedge \left( df + f \beta - \pp[f]{z} \alpha \right) \wedge (d\alpha - \alpha \wedge \beta)^{k-1}
    $$
    vanishes identically on $S = \CC^{2k}$. In this case, $\theta^{-1}(f)$ is can be written as
    $$
        \theta^{-1}(f) = f Z + \sum_{m=1}^{2k}{s_m \left[ \pp{x^m} - (a_m + zb_m) Z \right]},
    $$
    where $s_m$ is given by
    $$
        s_m = k \cdot \frac{dx^m \wedge (f d\omega + \omega \wedge df) \wedge (d\alpha + zd\beta - \alpha \wedge \imath_Z d (z\beta))^{k-1}}{dz \wedge (d\alpha + zd\beta - \alpha \wedge \imath_Z d (z\beta))^k}.
    $$
\end{theorem}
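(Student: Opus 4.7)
\emph{Plan.} The proof has three parts: injectivity of $\theta$, an explicit Pfaffian-Cramer solution of the linear system for the $s_m$ given $f$, and an identification of the image criterion with the holomorphic extension condition for that solution across the Martinet hypersurface $S$. For injectivity, suppose $\theta(X) = \omega(X) = 0$. Then $X \in \ker\omega$, and Cartan's formula together with the contact condition gives $\imath_X d\omega = h\omega$ for some $h$. Applying $\imath_X$ to the top form $\omega\wedge(d\omega)^k$ and using $\imath_X\omega = 0$, $\omega\wedge\omega = 0$, one computes
\[
\imath_X\bigl(\omega\wedge(d\omega)^k\bigr) \;=\; -k\,\omega\wedge(d\omega)^{k-1}\wedge h\omega \;=\; 0.
\]
Since $\omega\wedge(d\omega)^k = H\,dz\wedge dx^1\wedge\cdots\wedge dx^{2k}$ with $H$ nonvanishing off $S$, the pointwise identity $H\cdot\imath_X(dz\wedge dx^1\wedge\cdots\wedge dx^{2k}) = 0$ forces $X=0$ on $M\setminus S$, hence on the germ at the origin by holomorphic continuation.

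\emph{Setting up and solving the linear system.} Decompose $X = fZ + \sum_m s_m V_m$ with $V_m = \partial/\partial x^m - (a_m + zb_m)Z$. Evaluating the 1-form identity $\imath_Z\imath_X(\omega\wedge d\omega) = \imath_Z(f d\omega + \omega\wedge df)$ from Lemma~\ref{LEM:InfAutoCondition} on each $V_n$---using $\omega(V_n) = 0$, $dx^i(V_n) = \delta_{in}$, $dz(V_n) = -(a_n + zb_n)$, and setting $\gamma := \imath_Z d\omega$---reduces the contact condition to the skew-symmetric linear system
\[
\sum_{m=1}^{2k} d\omega(V_n, V_m)\, s_m \;=\; (f\gamma + df)(V_n), \qquad n = 1,\ldots,2k.
\]
Off $S$, the skew matrix $M := [d\omega(V_n, V_m)]$ has nonvanishing Pfaffian (equal to $H$ up to a nonvanishing factor, since $k!\,\pf(M)\, dz\wedge dx^1\wedge\cdots\wedge dx^{2k} = \omega\wedge(d\omega)^k$), so the Pfaffian-cofactor inverse formula from the Proposition above determines each $s_m$ uniquely as a ratio of Pfaffian expansions. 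The combinatorial step is then to reinterpret this ratio as a quotient of top forms: I will verify that $dz\wedge R^k = k!\,\pf(M)\,dz\wedge dx^1\wedge\cdots\wedge dx^{2k}$, where $R := d\alpha + zd\beta - \alpha\wedge\imath_Z d(z\beta)$, by noting that replacing $d\omega$ with $R$ alters the top-form coefficient only through terms with a repeated $dz$ or an extra $\beta\wedge\beta = 0$, all of which die under $dz\wedge(\cdot)$; an analogous expansion matches the numerator with $k\,dx^m\wedge(fd\omega + \omega\wedge df)\wedge R^{k-1}$, yielding the stated formula for $s_m$.

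\emph{Image criterion and main obstacle.} These explicit formulas are a priori only meromorphic, since Theorem~\ref{THM:JZRealizability}(1) forces $(d\alpha - \alpha\wedge\beta)^k = 0$ on $S$ and hence $dz\wedge R^k$ vanishes on $S$. Taylor expanding both numerator and denominator in $z$ and extracting the $z^0$ restriction to $S$, the first obstruction to holomorphicity of each $s_m$ turns out to be the top-form coefficient of $dz\wedge(df + f\beta - (\partial f/\partial z)\alpha)\wedge(d\alpha-\alpha\wedge\beta)^{k-1}$ at $z=0$. Thus $f$ lies in the image of $\theta$ if and only if this form vanishes on $S$, and in that case the formula of the second step produces the unique preimage. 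The principal obstacle I anticipate is this final matching: verifying that the single top-form condition in the statement is equivalent to the simultaneous holomorphicity of all $2k$ rational expressions $s_m$. The reason it reduces to one condition rather than $2k$ is the Pfaffian/skew structure, which aggregates the cofactor vanishings into a single obstruction form; Theorem~\ref{THM:JZRealizability}(2) then ensures that once this obstruction vanishes, no higher-order $z$-corrections can reintroduce poles.
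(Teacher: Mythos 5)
Your proposal is correct and follows essentially the same route as the paper's proof: decompose $X = fZ + \sum_m s_m V_m$, reduce the contact condition to a skew-symmetric linear system in the $s_m$, solve formally by Cramer's rule, cancel one factor of $\pf(W)$ via the Pfaffian--cofactor identity, and use the fact that $\pf(W)$ vanishes to exactly first order along $S$ (guaranteed by structural smoothness) to identify holomorphicity of the $s_m$ with the stated form condition. The only departures are minor: your injectivity argument via $\imath_X(\omega\wedge(d\omega)^k)=0$ off $S$ is a clean alternative to the paper's appeal to uniqueness of the Cramer solution, and the displayed $2k$-form condition is not a single scalar obstruction ``aggregated'' by the skew structure but simply packages the $2k$ coefficient conditions (one for each $m$); likewise there is no higher-order pole issue to rule out, since after the cofactor cancellation the denominator is $z$ times a unit, so divisibility of each numerator by $z$ is already equivalent to holomorphicity.
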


\bigskip

\section{Proof of Lemma~\ref{LEM:InfAutoCondition}} \label{SEC:ProofOfLEM:InfAutoCondition}

Let $X$ be a holomorphic vector field defined near the origin. Then, $X$ is an infinitesimal contact transformation if and only if there exists $h \in \mathcal{O}_{2k+1,0}$ satisfying
\begin{equation} \label{EQN:InfAuto}
\imath_X ( d\omega ) + d(\omega(X)) = h \omega.
\end{equation}
Now let us decompose $X$ as
$$
X = f Z  + X' \in \left\langle Z \right\rangle \bigoplus \mathrm{Ker}(\omega)_0 = \mathcal{O}(T\CC^{2k+1})_0,
$$
where $Z = \p{z}$. Observe that $X' \in \mathrm{Ker}(\omega)$ can be written as
$$
X' = \sum_{i=1}^{2k}{ s_i \left( \pp{x_i} - (a_i + zb_i) Z \right)}
$$
for some holomorphic functions $s_1,\ldots, s_{2k}$. Denoting the vector appearing in the parentheses of the $i$-th summand by $X_i$, we obtain a local frame $\{X_1, \ldots, X_{2k} \}$ for $\mathrm{Ker}(\omega)_0$.
Note that the dual frame of $\{Z, X_1, \ldots, X_{2k}\}$ is $\{\omega, dx^1, \ldots, dx^{2k} \}$.
 
On the other hand, since $f = \omega(X)$, the equation (\ref{EQN:InfAuto}) becomes
\begin{equation} \label{EQN:InfAutoWithFunction}
\imath_X (d\omega) + df = h \omega.
\end{equation}
The $1$-form $\imath_X(d\omega)$, being an element of $\mathcal{O}((T^{1,0}\CC^{2k+1})^*)_0$, can be written as a linear combination of $dx^1, \ldots, dx^{2k}$ and $\omega$. For this purpose, we are going to calculate $\imath_X(d\omega)(Z)$ and $\imath_X(d\omega)(X_l)$'s explicitly.
\begin{align*}
\imath_X (d\omega)(Z) = d\omega(X',Z) &= d\alpha (X',Z) + (dz\wedge\beta) (X',Z) + zd\beta(X',Z) \\
&= -\beta(X') - z(\partial_z \beta)(X'),
\end{align*}
where $\partial_z \beta$ denotes the form of the same degree with $\beta$ obtained by differentiating all the coefficients of $\beta$ with respect to $z$. Later, we will replace $\partial_z$ by $\imath_Z d$ when it acts on forms not involving $dz$ and on holomorphic functions.
For the expression of $\imath_X (d\omega)(X_l)$, we shall use explicit forms of $d\alpha$ and $d\beta$ as in the case of $\imath_X (d\omega)(Z)$:
\begin{align*}
d \alpha &= \sum_{i,j}{ \pp[a_j]{x^i} dx^i \wedge dx^j }, \\
d \beta &= \sum_{i,j}{ \pp[b_j]{x^i} dx^i \wedge dx^j } + dz \wedge \partial_z \beta.
\end{align*}
Since $\imath_X (d\omega)(X_l) = d\alpha(X,X_l) + (dz\wedge\beta)(X,X_l) + zd\beta(X,X_l)$, each term in the right-hand side is given as follows:
\begin{align*}
d\alpha(X,X_l) =& \sum_{m=1}^{2k}{s_m \left( \pp[a_l]{x^m} - \pp[a_m]{x^l} \right) },\\
d \beta(X, X_l) =& \sum_{m=1}^{2k}{ s_m \left( \pp[b_l]{x^m} - \pp[b_m]{x^l} \right)}
+ \pp[b_l]{z}\left(f - \sum_{i=1}^{2k}{ s_i (a_i+zb_i) } \right) \\ 
& + (a_l + zb_l)\sum_{i=1}^{2k}{s_i \pp[b_i]{z} }, \\ 
(dz \wedge\beta )(X,X_l) =& b_l \left( f-\sum_{i=1}^{2k}{ s_i(a_i+zb_i) } \right) + (a_l + zb_l) \sum_{i=1}^{2k}{ s_i b_i }.
\end{align*}
From these equalities, we know that the coefficient of $dx^l$ in $\imath_X (d\omega) + df$ with respect to the dual frame $\{\omega, dx^1, \ldots, dx^{2k}\}$ is equal to
\begin{equation} \label{EXP:TempCoeffOfdx_l}
\begin{aligned}
&\sum_{m=1}^{2k}{ s_m \left\{ \pp[(a_l + zb_l)]{x^m} - \pp[(a_m + zb_m)]{x^l} + b_m (a_l + zb_l) - b_l (a_m + zb_m) \right\} } \\
& + b_l f + z\pp[b_l]{z}\left(f - \sum_{m=1}^{2k}{ s_m (a_m+zb_m) } \right) \\ 
& + z(a_l + zb_l)\sum_{m=1}^{2k}{s_m \pp[b_m]{z} } + \pp[f]{x^l} - (a_l + zb_l)\pp[f]{z},
\end{aligned}
\end{equation}
\noindent which must be zero by (\ref{EQN:InfAutoWithFunction}).
On the other hand, the coefficient of $\omega$ in $\imath_X (d\omega) + df$, which is given by
$$
\pp[f]{z} -\sum_{m=1}^{2k}{ s_m b_m  } - z\sum_{m=1}^{2k}{ s_m \pp[b_m]{z} },
$$
should equal to $h$. This shows that a germ of a vector field $X  = f Z + \sum{ s_j X_j }$ is an infinitesimal contact transformation of $\omega$ if and only if $f$ and $s_j$'s satisfies the equation obtained by letting (\ref{EXP:TempCoeffOfdx_l}) be zero for all $l$. In other words, $X$ is an infinitesimal contact transformation of $\omega$ if and only if 
\begin{equation} \label{EQN:InfAutoConditionExpanded}
\sum_{m=1}^{2k}{ s_m \left( \pp[\eta_l]{x^m} - \pp[\eta_m]{x^l} + \eta_l \pp[\eta_m]{z} - \eta_m \pp[\eta_l]{z} \right) }
= \eta_l \pp[f]{z} - f \pp[\eta_l]{z} - \pp[f]{x^l}
\end{equation}
holds for all $l = 1,\ldots 2k$, where $\eta_m = a_m + zb_m$. These $2k$ equations in (\ref{EQN:InfAutoConditionExpanded}) can be expressed by a single equation in terms of differential forms
\begin{equation} \label{EQN:InfAutoCondition} 
\imath_Z \imath_X ( \omega \wedge d\omega ) = \imath_Z (f d\omega + \omega \wedge df).
\end{equation}
This complete the proof of Lemma~\ref{LEM:InfAutoCondition}.

\bigskip

\section{Proof of Theorem~\ref{THM:InfAutoFromFtn}} \label{SEC:ProofOfTHM:InfAutoFromFtn}

The injectivity of $\theta$ follows immediately from the formula for $\theta^{-1}(f)$ since it will be determined uniquely. To work with (\ref{EQN:InfAutoConditionExpanded}), we use Pfaffians of matrices introduced in the preliminaries.

Let us consider the condition (\ref{EQN:InfAutoConditionExpanded}) for infinitesimal contact transformations. For each $j=1,\ldots, 2k$, denote by $\mathbf{w}_j$ the $1 \times 2k$ matrix whose $i$-th row is 
\begin{equation} \label{EXP:EntryOfW}
\xi_{i,j} := \pp[\eta_i]{x^j} - \pp[\eta_j]{x^i} + \eta_i \pp[\eta_j]{z} - \eta_j \pp[\eta_i]{z}
\end{equation}
and let $W$ be the skew-symmetric $2k \times 2k$ matrix whose $j$-th column is $\mathbf{w}_j$. Denote by $\mathbf{d}$ the column vector in $\CC^{2k}$ whose $i$-th component is 
$$
d_l := \eta_l \pp[f]{z} - f \pp[\eta_l]{z} - \pp[f]{x^l}.
$$
It is worth noting that $\xi_{i,j}$ is equal to the coefficient of $dx^i \wedge dx^j$ in $-2(d\alpha + zd\beta - \alpha \wedge \partial_z (z\beta))$. 

In these setting, each $s_m$ in (\ref{EQN:InfAutoConditionExpanded}) can be found {\it formally} by Cramer's rule:
$$
s_m = \frac{\det [\mathbf{w}_1 \enskip \cdots \enskip \mathbf{w}_{m-1} \enskip \mathbf{d} \enskip \mathbf{w}_{m+1} \enskip \cdots \enskip \mathbf{w}_{2k}]}{\det{W}}.
$$
We can say that the above solutions really exist at least as meromorphic functions only when $\det{W}$ is not identically vanishing near the origin. Moreover, the desired holomorphic solutions exist only when $s_m \in \mathcal{O}(\CC^{2k+1})_0$. Equivalently, the vanishing order of the numerator in $s_m$ must be not less than that of the denominator. 

For this, observe first that 
$$
\omega \wedge (d\omega)^k = dz \wedge (d\alpha + zd\beta - \alpha \wedge \partial_z (z\beta))^k + \mathrm{o}(z),
$$
which implies that the $1$-jet of $\pf(W)$ with respect to $z$ coincides with that of $\omega \wedge (d\omega)^k / (dz\wedge dx^1 \wedge \cdots \wedge dx^{2k})$. The realizability condition and the structural smoothness condition in Theorem~\ref{THM:JZRealizability} guarentee that $\pf(W)$ is of the form $cz + \mathrm{o}(z)$ with $c \in \mathcal{O}(\CC^{2k})_0$ and $c(0) \neq 0$. Thus, we have $\det(W) = c^2 z^2 + \mathrm{o}(z^2)$, which concludes that the vanishing order of $\det(W)$ along the Martinet surface is $2$. Moreover, this shows that an infinitesimal contact transformation for which the coefficient of $Z$ is $f$ can always be found at least as a {\it meromorphic vector field}.

Using the cofactor expansion for $\det [\mathbf{w}_1 \enskip \cdots \enskip \mathbf{w}_{m-1} \enskip \mathbf{d} \enskip \mathbf{w}_{m+1} \enskip \cdots \enskip \mathbf{w}_{2k}]$ with respect to the $m$-th column, we have
\begin{align*}
\det [\mathbf{w}_1 \, \cdots \, \mathbf{w}_{m-1} \, \mathbf{d} \, \mathbf{w}_{m+1} \, \cdots \, \mathbf{w}_{2k}] &= \sum_{j=1}^{2k} (-1)^{m+j} d_j \det (W_{\hat{\jmath}}^{\hat{m}}) \\
&= \pf(W) \sum_{j=1}^{2k} (-1)^{m+j+1+H(j-m)} d_j \pf(W_{\hat{\jmath}\hat{m}}^{\hat{\jmath}\hat{m}}).
\end{align*}
So we can conclude that $s_m \in \mathcal{O}(\CC^{2k+1})_0$ if and only if 
$$
\sum_{\substack{j=1 \\ j \neq m}}^{2k} (-1)^{m+j+1+H(j-m)} d_j \pf(W_{\hat{\jmath}\hat{m}}^{\hat{\jmath}\hat{m}}) \in (z) \subset \mathcal{O}(\CC^{2k+1})_0.
$$
To express $s_m$ in terms of differential forms, consider the following equalities:
\begin{multline*}
(d\alpha + z d\beta - \alpha \wedge \partial_z (z\beta))^{k-1} \\
=  \sum\nolimits' { (-1)^{k-1}(k-1)!\pf(W_{\hat{\jmath}\hat{m}}^{\hat{\jmath}\hat{m}}) dx^1\wedge \cdots \wedge \widehat{dx^j} \wedge \cdots \wedge \widehat{dx^m} \wedge \cdots \wedge dx^{2k} },
\end{multline*}
and
$$
(fd\omega + \omega \wedge df) = \sum_{j=1}^{2k} d_j dz \wedge dx^j + (\text{terms not involving $dz$}).
$$
Here, $\sum\nolimits'$ means that the summation is taken so that $dx^1,\ldots,dx^{2k}$ are arranged in increasing order. From these equalities, we have
\begin{multline*}
dx^m \wedge (fd\omega + \omega \wedge df) \wedge (d\alpha + zd\beta - \alpha \wedge \partial_z (z \beta))^{k-1} \\
= \left( \sum_{j \neq m} (-1)^{k + m + j + 1 + H(j-m)} (k-1)! d_j \pf(W_{\hat{\jmath}\hat{m}}^{\hat{\jmath}\hat{m}}) \right) dz \wedge dx^1 \wedge \cdots \wedge dx^{2k}.
\end{multline*}
On the other hand, by the definition of the Pfaffian of $W$, we have
$$
dz \wedge (d\alpha + zd\beta - \alpha \wedge \partial_z (z\beta))^k = (-1)^k k! \pf(W) dz \wedge dx^1 \wedge \cdots \wedge dx^{2k}.
$$
We conclude that
\begin{equation} \label{EXP:FormulaFor:s_m}
s_m = k \cdot \frac{dx^m \wedge (f d\omega + \omega \wedge df) \wedge (d\alpha + zd\beta - \alpha \wedge \partial_z (z\beta))^{k-1}}{dz \wedge (d\alpha + zd\beta - \alpha \wedge \partial_z (z\beta))^k}
\end{equation}
and that $f \in \theta(\mathfrak{a}(\CC^{2k+1},\omega)_0)$ if and only if 
\begin{equation} \label{COND:FunctionInTheImage}
\frac{dx^m \wedge ( f d\omega + \omega \wedge df ) \wedge (d\alpha - \alpha \wedge \beta)^{k-1}}{dz \wedge dx^1 \wedge \cdots \wedge dx^{2k}} \in (z) \subset \mathcal{O}(\CC^{2k+1})_0
\end{equation}
for all $m=1,\ldots, 2k$. The above $2k$ conditions can be expressed by one differential form since the coefficient of $dz \wedge dx^1\wedge\cdots\wedge dx^{2k}$ in $dx^m \wedge (f d\omega + \omega \wedge df) \wedge (d\alpha - \alpha \wedge \beta)^{k-1}$ vanishes along $S$ if and only if the coefficient of $dx^1 \wedge \cdots \wedge \widehat{dx^m} \wedge \cdots \wedge dx^{2k}$ in
\begin{equation}  \label{COND:FunctionInTheImageInSingleEquation}
\left( d'f + f \beta - \pp[f]{z} \alpha \right) \wedge (d\alpha - \alpha \wedge \beta)^{k-1}
\end{equation}
vanishes along $S$, where $d'$ is the exterior derivative on $\CC^{2k} = \CC^{2k+1} \cap \{ z=0\}$. Thus (\ref{COND:FunctionInTheImage}) holds if and only if there exists a differential form $\gamma$ such that the form in (\ref{COND:FunctionInTheImageInSingleEquation}) is equal to $z \gamma$, that is,
\begin{equation} \label{EQN:FunctionInTheImageWithGammaInTheProof}
\left( d'f + f \beta - \pp[f]{z} \alpha \right) \wedge (d\alpha - \alpha \wedge \beta)^{k-1} = z\gamma.
\end{equation}
As we have observed in the case when $k = 1$, we cannot deduce that $f$ is contained in the ideal $(z)$ even though (\ref{COND:FunctionInTheImage}) holds for all $m =1,\ldots, 2k$. Now, assume that $f$ satisfies (\ref{COND:FunctionInTheImage}) for all $m=1,\ldots, 2k$. Then the corresponding vector field $X \in \mathfrak{a}(\CC^{2k+1},\omega)_0$ can be written as
$$
X = f Z + \sum_{m=1}^{2k}{s_m \left[\pp{x^m} - (a_m+zb_m)Z \right]}.
$$
The injectivity of $\theta$ follows immediately as we mentioned at the beginning of the proof.

\begin{remark}
In the proofs of Lemma~\ref{LEM:InfAutoCondition} and Theorem~\ref{THM:InfAutoFromFtn}, none of the properties of holomorphic functions have been used except their analyticity. Therefore, all results presented in this paper also hold on the category of real analytic manifolds as well, provided that the structurally smooth Martinet hypersurface is locally $\RR^{2k}$ in $\RR^{2k+1}$.
\end{remark}

\bigskip

\section{Applications} \label{SEC:Applications}

As Theorem~\ref{THM:InfAutoFromFtn} provides an explicit local formula for an infinitesimal contact transformation $X$ whose image under $\theta$ is $f$, we obatin the following immediate consequence.
\begin{corollary} \label{CORO:VanishingOrder1ImpliesVanishingOrder2}
Let $M$ be a complex manifold of complex dimension $2k+1$ ($k \ge 1$), and let $\omega$ be a singular contact structure on $M$ of the first type with structurally smooth Martinet hypersurface $S$. If $L = T^{1,0}M/\mathrm{Ker}(\omega)$ and $I_S$ is the ideal sheaf of $S$, then we have
$$
\theta(\mfa(M,\omega)) \cap H^0(M, L \otimes I_S) \subseteq H^0(M, L \otimes I_S^2).
$$
In particular, the infinitesimal contact transformation $X$ corresponding to $f \in \theta(\mfa(M,\omega)) \cap H^0(M, L \otimes I_S)$ vanishes everywhere on $S$.
\end{corollary}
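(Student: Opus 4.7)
The plan is to work locally, using the normal form $\omega = dz + \alpha + z\beta$ from Theorem~\ref{THM:JZRealizability}, with $S = \{z = 0\}$ and $I_S = (z)$, and then to exploit the explicit criterion in Theorem~\ref{THM:InfAutoFromFtn}. Any germ of a section of $L \otimes I_S$ is of the form $f = zg$ for some holomorphic $g$, so the goal reduces to showing that if $f$ lies in the image of $\theta$, then $g$ itself vanishes on $S$, forcing $f = zg \in (z^2) = (I_S^2)_0$.

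The main calculation is to substitute $f = zg$ into
$$
\Phi_f \;=\; dz \wedge \Bigl(df + f\beta - \pp[f]{z}\alpha\Bigr) \wedge (d\alpha - \alpha\wedge\beta)^{k-1}
$$
and collect the piece surviving modulo $z$. Using $df = g\,dz + z\,dg$ and $\pp[f]{z} = g + z\pp[g]{z}$, the bracketed $1$-form becomes $-g\alpha + g\,dz + z R$ for an explicit remainder $R$; wedging with $dz$ kills the $g\,dz$ summand, and the identity $\alpha \wedge \alpha = 0$ collapses $\alpha \wedge (d\alpha - \alpha\wedge\beta)^{k-1}$ to $\alpha \wedge (d\alpha)^{k-1}$. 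Restricting to $S$ yields
$$
\Phi_f \big|_{S} \;=\; -\,g|_S \cdot dz \wedge \alpha \wedge (d\alpha)^{k-1}\big|_S,
$$
and Theorem~\ref{THM:InfAutoFromFtn} forces this to vanish, giving the identity $g|_S \cdot dz \wedge \alpha \wedge (d\alpha)^{k-1}|_S = 0$.

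The crux is then to deduce $g|_S \equiv 0$, which requires the $(2k-1)$-form $\alpha \wedge (d\alpha)^{k-1}|_S$ to be nonzero near the chosen point of $S$---equivalently, that the coefficient of $dx^1 \wedge \cdots \wedge dx^{2k}$ in $dx^m \wedge \alpha \wedge (d\alpha)^{k-1}|_S$ be nonvanishing for at least one index $m$. I would derive this pointwise nondegeneracy from the realizability and structural-smoothness conditions of Theorem~\ref{THM:JZRealizability}---these are precisely the ingredients that, in the proof of Theorem~\ref{THM:InfAutoFromFtn}, force $\pf(W) = cz + o(z)$ with $c(0) \neq 0$ and so govern the leading behaviour of $\alpha$ and $d\alpha$ along $S$. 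Once $g|_S \equiv 0$ is established, $f = zg \in (z^2)$ locally, proving the sheaf inclusion $\theta(\mfa(M,\omega)) \cap H^0(M, L \otimes I_S) \subseteq H^0(M, L \otimes I_S^2)$.

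For the ``in particular'' statement I would then write $f = z^2 h$ and plug into the explicit formula $s_m = k \cdot dx^m \wedge (fd\omega + \omega \wedge df) \wedge \Theta^{k-1}/(dz \wedge \Theta^k)$ for $X = \theta^{-1}(f)$. A parallel expansion shows that $(fd\omega + \omega \wedge df)|_S = 0$ and that its leading $z$-jet on $S$ is controlled by $h|_S \cdot dz \wedge \alpha$; by the same nondegeneracy of $\alpha \wedge (d\alpha)^{k-1}|_S$, the numerators gain an additional vanishing along $S$, while the denominator $\pf(W)$ vanishes exactly to first order, so each $s_m$ lies in $(z)$. Combined with $f|_S = 0$, this yields $X = fZ + \sum_m s_m X_m \equiv 0$ on $S$. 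The hardest single point of the argument is the pointwise nonvanishing of $\alpha \wedge (d\alpha)^{k-1}|_S$, which is what converts the algebraic identities supplied by Theorem~\ref{THM:InfAutoFromFtn} into the desired pointwise vanishing conclusions.
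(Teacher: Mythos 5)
Your route is the same one the paper takes---its entire proof is the remark that the corollary ``immediately follows'' from the criterion and the formula in Theorem~\ref{THM:InfAutoFromFtn}---and your reduction is the computation that remark presupposes: substituting $f=zg$ into the criterion correctly yields $g|_S\cdot dz\wedge\alpha\wedge(d\alpha)^{k-1}|_S=0$. The genuine gap is precisely the step you flag as the crux: the nonvanishing of $\alpha\wedge(d\alpha)^{k-1}|_S$. You assert it follows from realizability and structural smoothness but give no derivation, and for $k\ge 2$ it does not follow. Take $k=2$, $\alpha=dx^1$, $\beta=x^3dx^2+dx^4$ on $\CC^4$: both sides of condition (1) of Theorem~\ref{THM:JZRealizability} vanish, condition (2) equals $-\alpha\wedge\beta\wedge d\beta=-dx^1\wedge dx^4\wedge dx^3\wedge dx^2\neq 0$, and $\omega=dz+\alpha+z\beta$ satisfies $\omega\wedge(d\omega)^2=2z\,dz\wedge dx^1\wedge\cdots\wedge dx^4$, so $S=\{z=0\}$ is structurally smooth and $\omega$ is of the first type; yet $\alpha\wedge d\alpha\equiv 0$, so your identity degenerates to $0=0$. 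In this example $X=\partial/\partial x^4$ satisfies $\mathcal{L}_X\omega=0$ and $\theta(X)=z\in I_S\setminus I_S^2$, so the nondegeneracy you need is an additional hypothesis, not a consequence of the stated ones; your argument closes only for $k=1$, where $\alpha\wedge(d\alpha)^0=\alpha$ is a nonvanishing Pfaffian form.

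The second gap is in your sketch of the ``in particular'' clause, where the computation in fact goes the opposite way from what you claim. With $f=z^2h$ one has $fd\omega+\omega\wedge df=-2hz\,dz\wedge\alpha+\mathrm{o}(z)$, so the numerator of $s_m$ is $2khz\,dz\wedge dx^m\wedge\alpha\wedge(d\alpha)^{k-1}+\mathrm{o}(z)$: it vanishes to order exactly one along $S$ precisely when the nondegeneracy you invoke holds, i.e.\ there is no ``additional vanishing.'' Since the denominator $dz\wedge\Theta^k$ also vanishes to order exactly one, $s_m|_S$ is generically nonzero. This is visible in the paper's own Example~\ref{EXAM:SimpleCase1}: for $f=z^2$ the formula gives $s_1=zx^1$ and $s_2=2e^{x^1x^2}$, and one checks directly that the resulting $X$ satisfies $\mathcal{L}_X\omega=2(x^1e^{x^1x^2}+z)\,\omega$ with $X|_S=2e^{x^1x^2}\,\partial/\partial x^2\neq 0$. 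So the argument you outline cannot yield $X|_S=0$; what does survive is the vanishing on $S$ of the $Z$-component of $X$, which is the content of Theorem~\ref{THM:PreserveMartinetHypersurface} and requires the separate cancellation argument given there rather than an order count on $s_m$.
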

\begin{proof}
This immediately follows from the condition for $f \in \theta(\mfa(M,\omega)) \cap H^0(M, L \otimes I_S)$ and the formula for $\theta^{-1}(f)$ in Theorem~\ref{THM:InfAutoFromFtn}.
\end{proof}

In the two simple examples below, by finding the infinitesimal contact transformation $X$ for a given function in the image of $\theta$, we observe two interesting properties.

\begin{example} \label{EXAM:SimpleCase1}
Consider $\alpha = e^{x^1 x^2} dx^1$ and $\beta = -x^1 dx^2$ in $\CC^3$ with coordinates $(z,x^1,x^2)$. Then $\omega = dz + \alpha + z \beta$ is a singular contact form with structurally smooth Martinet hypersurface $\CC^2 = (z = 0)$. Solving (\ref{EQN:FunctionInTheImageWithGammaInTheProof}) with $k=1$ and $f = \sum{c_j(x) z^j}$, one can easily check that $f \in \theta(\mfa(\CC^3,\omega))$ if and only if there exists a holomorphic function $g(x^1)$ such that
$$
f = e^{x^1x^2} g(x^1) + (x^2 g(x^1) + g'(x^1))z + \mathrm{o}(z).
$$
\end{example}

\begin{example} \label{EXAM:SimpleCase2}
Consider $\alpha = (x^2+1)^{-1}e^{-x^1x^2} dx^1$ and $\beta = (x^1 + (x^2+1)^{-1}) dx^2$ near the origin in $\CC^3$ with coordinates $(z, x^1, x^2)$. Then $\omega = dz + \alpha + z\beta$ is realizable with structurally smooth $S$ near the origin. 
As in the first example, one can show that $f \in \theta( \mathfrak{a}(\CC^3, \omega))$ if and only if there exists a holomorphic function $g(x^1)$ defined near the origin such that
$$
f = (x^2 + 1)^{-1}e^{-x^1x^2}g(x^1) + (g'(x^1) - x^2g(x^1))z + o(z).
$$
For simplicity, suppose that $g(x^1) \equiv 1$ and there are no higher-order terms in $z$, that is,
$$
f = (x^2+1)^{-1} e^{-x^1x^2} -x^2z.
$$
\end{example}

In the above examples, note that $f$ vanishes identically along $S$ if and only if $g(x^1) \equiv 0$. When this is the case, the vanishing order of $f$ along $S$ is at least $2$ as we have seen in Corollary~\ref{CORO:VanishingOrder1ImpliesVanishingOrder2}. Moreover, a direct computation shows that the coefficient of $Z = \p{z}$ of the corresponding infinitesimal contact transformation $X$ vanishes along $S$ even if $f$ does not vanish along $S$.

If $X$ is an infinitesimal contact transformation of a singular contact structure $\omega$, it is naturally expected that the $1$-parameter group of contact transformations generated by $X$ keeps any noncontact points staying on the locus of noncontact points. In this point of view, every infinitesimal contact transformation must be tangent to the Martinet hypersurface as shown in the following theorem.

\begin{theorem} \label{THM:PreserveMartinetHypersurface}
Let $M$ be a complex manifold of complex dimension $2k+1$ ($k \ge 1$), and let $\omega$ be a singular contact structure on $M$ of the first type with structurally smooth Martinet hypersurface $S$. If a holomorphic vector field $X$ is an infinitesimal contact transformation of $\omega$, then the restriction $X|_S$ to the Martinet hypersurface $S$ is a holomorphic tangent vector field on $S$.
\end{theorem}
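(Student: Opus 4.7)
The strategy is to derive an identity of the form $X(H) = c \cdot H$ with $c$ a holomorphic function, where $H$ is a local defining function of the Martinet hypersurface, and then invoke structural smoothness to conclude tangency. Since the conclusion is local, I would work in a neighborhood of a point of $S$ with coordinates $(z, x^1, \ldots, x^{2k})$, set $\Omega := dz \wedge dx^1 \wedge \cdots \wedge dx^{2k}$, and write $\omega \wedge (d\omega)^k = H\,\Omega$ for the induced local holomorphic function $H$. By definition $S$ is locally cut out by $\{H = 0\}$, and structural smoothness means $dH$ is nowhere zero on $S$. Hence $X|_S$ takes values in $TS$ if and only if $X(H)$ vanishes identically on $S$.

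The key step is to compute $\mathcal{L}_X(\omega \wedge (d\omega)^k)$ in two ways. On the one hand, since $\mathcal{L}_X \omega = h\omega$ for some holomorphic $h$ and $\mathcal{L}_X d = d \mathcal{L}_X$, one has $\mathcal{L}_X d\omega = dh \wedge \omega + h\,d\omega$, and the Leibniz rule for $\mathcal{L}_X$ yields
$$\mathcal{L}_X(\omega \wedge (d\omega)^k) = h\,\omega \wedge (d\omega)^k + k\,\omega \wedge (d\omega)^{k-1} \wedge (dh \wedge \omega + h\,d\omega).$$
The cross term involving $dh \wedge \omega$ drops out because $\omega \wedge \omega = 0$ after moving the rightmost $\omega$ past $(d\omega)^{k-1} \wedge dh$, leaving $\mathcal{L}_X(\omega \wedge (d\omega)^k) = (k+1)h \cdot \omega \wedge (d\omega)^k$. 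On the other hand, applying the Leibniz rule to $H\Omega$ and noting that $\mathcal{L}_X \Omega = g\,\Omega$ for the divergence $g$ of $X$ (since any top form scales under $\mathcal{L}_X$), one obtains $\mathcal{L}_X(H\Omega) = \bigl(X(H) + Hg\bigr)\Omega$.

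Comparing the two expressions produces
$$X(H) = \bigl((k+1)h - g\bigr)\,H,$$
so $X(H)$ vanishes identically on $S$, and by structural smoothness $X|_S$ lies in $TS$. The only genuinely subtle point is the sign check that kills the $dh \wedge \omega$ cross term, which is a one-line computation. Notably, this argument uses neither the explicit normal form of Theorem~\ref{THM:JZRealizability} nor the formula for $\theta^{-1}$ in Theorem~\ref{THM:InfAutoFromFtn}; the only inputs are the defining identity $\mathcal{L}_X \omega = h\omega$ of an infinitesimal contact transformation and the nonvanishing of $dH$ along $S$.
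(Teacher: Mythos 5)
Your proof is correct, and it takes a genuinely different and more elementary route than the paper's. You work with the defining equation $\mathcal{L}_X\omega = h\omega$ directly and compute $\mathcal{L}_X(\omega\wedge(d\omega)^k)$ two ways: the contact-transformation identity gives $(k+1)h\cdot\omega\wedge(d\omega)^k$ (the cross term indeed dies because $\omega\wedge dh\wedge\omega = -\omega\wedge\omega\wedge dh = 0$), while the Leibniz rule applied to $H\Omega$ gives $(X(H)+Hg)\Omega$ with $g$ the divergence of $X$ relative to $\Omega$; comparing yields $X(H) = ((k+1)h-g)H$, hence $X(H)|_S = 0$, and structural smoothness converts this into tangency. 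The paper instead invokes its normal form $\omega = dz+\alpha+z\beta$ from Theorem~\ref{THM:JZRealizability} together with the explicit formula for $\theta^{-1}(f)$ from Theorem~\ref{THM:InfAutoFromFtn}, and carries out a page-long computation modulo $z^2$ to show that the $\partial/\partial z$-component $f-\sum_m(a_m+zb_m)s_m$ vanishes on $S=\{z=0\}$ --- which is the same tangency statement in those coordinates. Your argument buys brevity and generality: it needs neither the normal form nor the hypothesis that the singularities are of the first type, and even without structural smoothness it still yields the scheme-theoretic invariance $X(H)\in(H)$ of the Martinet locus. What the paper's computation buys is that it sits inside the machinery already developed for Theorem~\ref{THM:InfAutoFromFtn}, whose explicit $s_m$ are reused in the subsequent corollary identifying $\mathcal{L}_{X|_S}\alpha$; your proof establishes the theorem itself but would not by itself produce those explicit coefficient formulas.
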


\begin{proof}
It is enough to show that this theorem is true locally. Consider the germ of the singular contact structure $\omega$ at $p \in S$. We may assume that $\omega$, $S$ and $X$ are given as in Section~\ref{SEC:MainStatements}. We will show that the coefficient of $Z$ of $X$ vanishes along $S$. Since $X$ is an infinitesimal contact transformation of $\omega$, there is a differential form $\gamma$ such that
\begin{equation} \label{EQN:FunctionInTheImageWithGamma}
\left( d'f + f \beta - \pp[f]{z} \alpha \right) \wedge (d\alpha - \alpha \wedge \beta)^{k-1} = z\gamma.
\end{equation}
This characterization will play a crucial role in the proof. We will conclude the theorem by showing that the constant term of the series expansion with respect to $z$ of the function 
\begin{align*}
&f - \sum_{m=1}^{2k} (a_m + z b_m) s_m \\
&\quad \quad = f - k \cdot \frac{(\alpha + z \beta) \wedge (f d\omega + \omega \wedge df) \wedge (d\alpha + zd\beta - \alpha \wedge \partial_z (z\beta))^{k-1}}{dz \wedge (d\alpha + zd\beta - \alpha \wedge \partial_z (z\beta))^k}
\end{align*}
at the origin is zero. Equivalently, we will demonstrate that the $1$-jet of the form
\begin{equation} \label{EQN:1JetOfzPartOfX}
\begin{aligned}
&f dz \wedge (d\alpha + zd\beta - \alpha \wedge \partial_z (z\beta))^k \\ 
&\quad - k(\alpha + z\beta) \wedge (f d\omega + \omega \wedge df) \wedge (d\alpha + zd\beta - \alpha \wedge \partial_z (z\beta))^{k-1}
\end{aligned}
\end{equation}
vanishes. This suffices to show that (\ref{EQN:1JetOfzPartOfX}) is equal to zero modulo $z^2$. Expanding $\omega$ and $d\omega$ to differential forms written in $\alpha$, $\beta$ and $dz$, we obtain the following equality. In the rest of the proof, all equalities are understood modulo $z^2$.
\begin{align*}
&f dz \wedge (d\alpha + zd\beta - \alpha \wedge \partial_z (z\beta))^k \\ 
&\quad - k(\alpha + z\beta) \wedge (fd\omega + \omega \wedge df) \wedge (d\alpha + z d\beta - \alpha \wedge \partial_z (z\beta))^{k-1} \\
&= dz \wedge \{f(d\alpha)^k + kzf(d\alpha)^{k-1} \wedge d\beta \\ 
&\quad + k((d\alpha)^{k-1} \wedge (\alpha + z\beta) + (k-1)z(d\alpha)^{k-2}\wedge d\beta  \wedge \alpha)  \wedge d'f \}.
\end{align*}
Now replacing $(d\alpha)^{k-1}\wedge d'f$ by the other terms in (\ref{EQN:FunctionInTheImageWithGamma}) gives
\begin{align*}
&f dz \wedge (d\alpha + zd\beta - \alpha \wedge \partial_z (z\beta))^k \\ 
& \quad - k(\alpha + z\beta) \wedge (fd\omega + \omega \wedge df) \wedge (d\alpha + z d\beta - \alpha \wedge \partial_z (z\beta))^{k-1} \\
&=  dz \wedge \left(
\begin{aligned}
&f((d\alpha)^k - k(d\alpha)^{k-1}\wedge \alpha \wedge \beta) + kzf(d\alpha)^{k-1} \wedge d\beta \\
& - kz \pp[f]{z} (d\alpha)^{k-1} \wedge \alpha \wedge \beta + k(k-1)z(d\alpha)^{k-2} \wedge d\beta \wedge \alpha \wedge d'f \\ 
& -kz\gamma \wedge \alpha 
\end{aligned}
\right).
\end{align*}
Taking $d'$ on both sides of (\ref{EQN:FunctionInTheImageWithGamma}) and replacing 
$$
kzf(d\alpha)^{k-1}\wedge d\beta + k(k-1)z (d\alpha)^{k-2} \wedge d\beta \wedge \alpha \wedge d'f
$$
by the rest of the resulting equality, we obtain
\begin{align*}
&f dz \wedge (d\alpha + zd\beta - \alpha \wedge \partial_z (z\beta))^k \\ 
& \quad - k(\alpha + z\beta) \wedge (fd\omega + \omega \wedge df) \wedge (d\alpha + z d\beta - \alpha \wedge \partial_z (z\beta))^{k-1}\\
&=  dz \wedge \left(
\begin{aligned}
&f((d\alpha)^k - k(d\alpha)^{k-1}\wedge \alpha \wedge \beta) - kz \pp[f]{z} (d\alpha)^{k-1} \wedge \alpha \wedge \beta \\ 
&-kz\gamma \wedge \alpha - kz(d\alpha)^{k-1} \wedge \alpha \wedge d'\left(\pp[f]{z} \right) \\
&+ kz\pp[f]{z} (d\alpha)^k + k^2z(d\alpha)^{k-1}\wedge\beta\wedge d'f
\end{aligned}
\right)
\end{align*}
From the relation (\ref{EQN:FunctionInTheImageWithGamma}) multiplied by $\beta$ on both sides, we have
\begin{align*}
&f dz \wedge (d\alpha + zd\beta - \alpha \wedge \partial_z (z\beta))^k \\ 
& \quad - k(\alpha + z\beta) \wedge (fd\omega + \omega \wedge df) \wedge (d\alpha + z d\beta - \alpha \wedge \partial_z (z\beta))^{k-1}\\
&=  dz \wedge \left(
\begin{aligned}
&f((d\alpha)^k - k(d\alpha)^{k-1}\wedge \alpha \wedge \beta) \\
& + kz\alpha \wedge \gamma  - kz \pp[f]{z} (d\alpha)^{k-1} \wedge \alpha \wedge \beta  - kz(d\alpha)^{k-1} \wedge \alpha \wedge d'\left(\pp[f]{z} \right)
\end{aligned}
\right)
\end{align*}
Modifying the last three terms in the above shows
\begin{align*}
& kz\alpha \wedge \gamma  - kz \pp[f]{z} (d\alpha)^{k-1} \wedge \alpha \wedge \beta  - kz(d\alpha)^{k-1} \wedge \alpha \wedge d'\left(\pp[f]{z} \right) \\
&= kzf (d\alpha)^{k-1}\wedge \alpha \wedge (\partial_z \beta).
\end{align*}
Substituting the above equality into (\ref{EQN:1JetOfzPartOfX}), we have
\begin{align*}
&f dz \wedge (d\alpha + zd\beta - \alpha \wedge \partial_z (z\beta))^k \\ 
&\quad - k(\alpha + z\beta) \wedge (fd\omega + \omega \wedge df) \wedge (d\alpha + z d\beta - \alpha \wedge \partial_z (z\beta))^{k-1}\\
&= f dz \wedge \{ (d\alpha)^k - k(d\alpha)^{k-1}\wedge \alpha \wedge \beta + kz (d\alpha)^{k-1} \wedge \alpha \wedge (\partial_z \beta) \}.
\end{align*}
Write $\beta = \beta_0 + z \beta_1 + \mathrm{o}(z)$ where $\beta_0$ and $\beta_1$ are $1$-forms independent of $z$. Substituting this into the above equality, we finally obtain
\begin{align*}
&f dz \wedge (d\alpha + zd\beta - \alpha \wedge \partial_z (z\beta))^k \\ 
& \quad - k(\alpha + z\beta) \wedge (fd\omega + \omega \wedge df) \wedge (d\alpha + z d\beta - \alpha \wedge \partial_z (z\beta))^{k-1} = 0.
\end{align*}
This shows the coefficient of $Z$ of $X$ is zero on $S$ as we desired.
\end{proof}

The other observation is that the infinitesimal contact transformations $X$ in Example~\ref{EXAM:SimpleCase1} and Example~\ref{EXAM:SimpleCase2} seem to play the role of infinitesimal automorphisms for $\alpha$, that is, $\mathcal{L}_{X|_S} \alpha = h\alpha$ for a (local) holomorphic function $h$ on $S$.

\begin{corollary}
With the same assumption as in Theorem~\ref{THM:PreserveMartinetHypersurface}, let $\alpha$ be the restriction of $\omega$ to $S$. Then, for each point $p$ on $S$, there exists an open neighborhood $U$ of $p$ in $S$ and a holomorphic function $h \in \mathcal{O}_{S}(U)$ such that
$$
\mathcal{L}_{X|_S} \alpha = h \alpha.
$$
Moreover, if we write $\omega = dz + \alpha + z\beta$ locally and $X$ is an infinitesimal contact transformation with $\theta(X) = f$, then
$$
\mathcal{L}_{X|_S} \alpha = \left( \pp[f]{z} - \sum_{m=1}^{2k}s_mb_m \right) \alpha,
$$
where $s_m$'s are given as in Theorem~\ref{THM:InfAutoFromFtn} and restricted to $S = \CC^{2k}$.
\end{corollary}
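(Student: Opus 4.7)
The plan is to derive the corollary from Lemma~\ref{LEM:InfAutoCondition} by pulling back the conformal equation $\mathcal{L}_X\omega = h\omega$ along the inclusion $\iota\colon S\hookrightarrow M$, with Theorem~\ref{THM:PreserveMartinetHypersurface} (which guarantees tangency of $X$ to $S$) ensuring that this pullback produces the stated formula. Since the claim is local, I would work in the coordinates of Section~\ref{SEC:MainStatements}, so that $S = \{z = 0\}$ and $\omega = dz + \alpha + z\beta$.

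Writing $F := Xz = f - \sum_{m}s_m(a_m + zb_m)$ and using the Leibniz rule for the Lie derivative,
$$
\mathcal{L}_X \omega = dF + \mathcal{L}_X \alpha + F\beta + z\,\mathcal{L}_X\beta,
$$
which Lemma~\ref{LEM:InfAutoCondition} identifies with $h\omega$ for $h = \pp[f]{z} - \sum_{m=1}^{2k} s_m b_m - z\sum_{m=1}^{2k} s_m \pp[b_m]{z}$. I would then apply $\iota^*$ to both sides. On the right, $\iota^*dz = 0$ and $\iota^*(z\beta) = 0$, so $\iota^*(h\omega) = h|_S \cdot \alpha$. On the left, Theorem~\ref{THM:PreserveMartinetHypersurface} is exactly the statement that $F|_S \equiv 0$, so $\iota^*dF = d(F|_S) = 0$ and $\iota^*(F\beta) = 0$; the term $\iota^*(z\,\mathcal{L}_X\beta)$ vanishes trivially. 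What remains is
$$
\iota^*\mathcal{L}_X\alpha = h|_S \cdot \alpha.
$$

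To finish, I would invoke the standard identity $\iota^*\mathcal{L}_X\alpha = \mathcal{L}_{X|_S}(\iota^*\alpha)$, valid whenever $X|_S$ is tangent to $S$ (checked in one line via Cartan's formula, since both $\imath_X$ and $d$ commute with $\iota^*$ under the tangency hypothesis), together with the observation that $\iota^*\alpha = \alpha$ because $\alpha$ involves neither $z$ nor $dz$. This yields the stated formula and produces the desired $h \in \mathcal{O}_S(U)$ automatically. There is no serious obstacle: once Lemma~\ref{LEM:InfAutoCondition} supplies the explicit conformal factor and Theorem~\ref{THM:PreserveMartinetHypersurface} supplies the tangency, the rest is essentially bookkeeping, the only fine point being the pullback/Lie-derivative commutativity under tangency.
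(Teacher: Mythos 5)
Your proof is correct and follows essentially the same route as the paper: both arguments combine the tangency of $X$ to $S$ from Theorem~\ref{THM:PreserveMartinetHypersurface} with the explicit conformal factor from Lemma~\ref{LEM:InfAutoCondition}, using the commutation of $\mathcal{L}_X$ with restriction to $S$ (which you verify termwise rather than invoke in one line). No gaps.
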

\begin{proof}
By Theorem~\ref{THM:PreserveMartinetHypersurface}, the Lie derivative with respect to $X$ commutes with the restriction to $S$. In other words, we have
$$
\mathcal{L}_{X|_S} \alpha = (\mathcal{L}_X \omega)|_S.
$$
The corollary immediately follows from this identity.
\end{proof}

\bigskip

\bibliography{Bib.bib}

\quad

{\parindent0pt
\normalsize

\textsc{Hoseob Seo}

Research Institute of Mathematics, Seoul National University

1 Gwanak-ro, Gwanak-gu, Seoul, 08826, Republic of Korea

e-mail: hoseobseo@snu.ac.kr; hsseo.math@gmail.com
}

\end{document}